
\documentclass[12pt]{fsmfart}
\usepackage{color}
\usepackage{amssymb,verbatim}
\usepackage{amsthm,array,amssymb,amscd,amsfonts,amssymb,latexsym, url}
\usepackage{amsmath}
\usepackage[all]{xy}
\usepackage[french]{babel}
\usepackage{times}

 \usepackage{mathrsfs}  
\usepackage{bm}        
\usepackage{mathtools} 

 \newcounter{spec}
{\end{list}}

\renewcommand{\P}{{\mathbf P}}

\newcommand{\N}{{\mathbb N}}

\newcommand{\Z}{{\mathbb Z}}
\newcommand{\Q}{{\mathbb Q}}

\newcommand{\oi}{\hskip1mm {\buildrel \simeq \over \rightarrow} \hskip1mm}

\newcommand{\Br}{{\operatorname{Br}}}

\newcommand{\Spec}{{\operatorname{Spec \ }}}

\renewcommand{\lim}{\varprojlim}

\renewcommand{\phi}{\varphi}

\numberwithin{equation}{section}

\newfont{\gothic}{eufb10}

\newtheorem{theo}{Th\'{e}or\`{e}me}[section]
\newtheorem{prop}[theo]{Proposition}

\newtheorem{lem}[theo]{Lemme}
\newtheorem{cor}[theo]{Corollaire}
\theoremstyle{definition}
\newtheorem{defi}[theo]{D\'efinition}
\theoremstyle{remark}
\newtheorem{rema}[theo]{Remarque}

\newcommand{\bthe}{\begin{theo}}
\newcommand{\ble}{\begin{lem}}
\newcommand{\bpr}{\begin{prop}}
\newcommand{\bco}{\begin{cor}}
\newcommand{\bde}{\begin{defi}}
\newcommand{\ethe}{\end{theo}}
\newcommand{\ele}{\end{lem}}
\newcommand{\epr}{\end{prop}}
\newcommand{\eco}{\end{cor}}
\newcommand{\ede}{\end{defi}}

\newcommand{\Gal}{{\rm{Gal}}}

\newcommand{\F}{{\mathbb F}}

%

\DeclareFontFamily{U}{wncy}{}
\DeclareFontShape{U}{wncy}{m}{n}{%
<5>wncyr5%
<6>wncyr6%
<7>wncyr7%
<8>wncyr8%
<9>wncyr9%
<10>wncyr10%
<11>wncyr10%
<12>wncyr6%
<14>wncyr7%
<17>wncyr8%
<20>wncyr10%
<25>wncyr10}{}
\DeclareMathAlphabet{\cyr}{U}{wncy}{m}{n}
\def\A{{\mathbb A}}

\def\G{{\mathbb G}}

\def\A{{\bf A}}
\def\Norm{{\rm Norm}}

\begin{document}

\title[Non puissances $n$-i\`emes]{Le compl\'ementaire des puissances $n$-i\`emes dans un corps de nombres est un  ensemble diophantien}
\author{Jean-Louis Colliot-Th\'el\`ene}
 \address{CNRS \& Universit\'e Paris Sud\\Math\'ematiques, B\^atiment 425\\F-91405 Orsay Cedex\\France}
 \email{jlct@math.u-psud.fr}
 \author{Jan Van Geel}
\address{Universiteit Gent \\Vakgroep Wiskunde, Krijgslaan 281, S22
\\B-9000 Gent\\ Belgi\"e}
 \email{jvg@cage.ugent.be}
 
\date{5 janvier 2014}
\maketitle
\begin{abstract}
Le cas $n=2$ de l'\'enonc\'e du titre est un th\'eor\`eme de B. Poonen (2009), qui utilise une famille \`a un param\`etre
de vari\'et\'es et un th\'eor\`eme sur l'obstruction de Brauer-Manin pour les points rationnels de ces vari\'et\'es d\^u \`a Coray, Sansuc et l'un des auteurs (1980).
Pour $n=p$ premier quelconque, on dispose d'une famille  de vari\'et\'es analogues, qui ont \'et\'e consid\'er\'ees par
A. V\'arilly-Alvarado et B. Viray (2012). Nous obtenons le r\'esultat  annonc\'e en rempla\c cant ladite famille
par sa puissance sym\'etrique $(2p+1)$-i\`eme, et en appliquant un th\'eor\`eme sur l'obstruction de Brauer-Manin pour  les points rationnels de telles puissances sym\'etriques 
reposant sur des travaux de l'un des auteurs avec Swinnerton-Dyer (1994) et avec Skorobogatov et
Swinnerton-Dyer (1998),    qui  g\'en\'eralisent  un travail de Salberger (1988).

\end{abstract}
\begin{altabstract}
Voor $n=2$ is de bewering in de titel een
stelling van B. Poonen (2009). Hij gebruikt  een \'e\'en-parameter familie
van vari\"eteiten, en een stelling van Coray, Sansuc en \'e\'en van
de auteurs (1980), over de Brauer-Manin obstructie voor de rationale
punten van deze vari\"eteiten. Voor $n=p$, $p$ een willekeurig priemgetal, beschouwden A.
V\'arilly-Alvarado en B. Viray (2012)  een analoge familie van
vari\"eteiten. We bewijzen de bewering in de titel door
deze familie te vervangen door de $(2p+1)$-de symmetrische macht ervan en
door een stelling over de Brauer-Manin obstructie voor de rationale
punten van zulke symmetrische machten toe te passen.  Deze stelling steunt op werk van \'e\'en van de auteurs
met Swinnerton-Dyer (1994) en met Skorobogatov en Swinnerton-Dyer (1998).
Dat werk veralgemeent resultaten van Salberger (1988)
\end{altabstract}

\section{Introduction}

Soient $k$ un corps  et $n>0$ un entier. Un sous-ensemble $D$ de $\A^n(k)=k^{n}$ est dit diophantien s'il existe une 
$k$-vari\'et\'e $Z$ et un $k$-morphisme $f : Z \to \A^n_{k}$ tel que $D=f(X(k))$.
Dans cette d\'efinition, on peut supposer $Z$ affine. On peut  m\^eme supposer que
$Z$ est un sous-sch\'ema ferm\'e de $\A^{m+n}_{k}$ pour un certain $m$
et que le morphisme $f$ est induit par la projection $\A^{m+n}_{k} \to \A^n_{k}$.
Une union finie de sous-ensembles diophantiens dans $\A^n(k)$  est un sous-ensemble
diophantien. Nous  montrons (Th\'eor\`eme \ref{thprincipal}) :

{\it Pour tout corps de nombres $k$ et tout  entier $r>1 $,  
le compl\'ementaire dans $k^{\times}$ de l'ensemble $k^{\times r}$ des puissances $r$-i\`emes
est un ensemble diophantien.}

Pour $r$ une puissance de $2$, c'est un th\'eor\`eme de B. Poonen \cite{poonen}.
 Une   d\'emonstration ``\'el\'ementaire'' du  cas $r=2$ sur $k=\Q$  vient d'\^etre donn\'ee par
 J.~K\"onigsmann \cite[Prop. 20 (b)]{K}.

Dans le cas $r=2$, l'argument de Poonen utilise les surfaces de Ch\^atelet d'\'equation affine
$$y^2-az^2=P(x) $$
avec $a \in k^{\times}$ et  $P(x)$  polyn\^ome s\'eparable produit de deux polyn\^omes 
$Q(x)$ et $R(x)$ de degr\'e 2. Pour de telles surfaces, il est \'etabli par Colliot-Th\'el\`ene, Coray et Sansuc 
 \cite{CTCoSa}  que l'obstruction de Brauer-Manin est la seule obstruction au principe de Hasse pour les points rationnels sur de telles surfaces.  Ce r\'esultat est g\'en\'eralis\'e dans \cite{CTSaSD} \`a tout polyn\^ome $P(x)$ de degr\'e 4.
 
 Poonen \cite{poonen} part d'un contre-exemple au principe de Hasse d'\'equation
 $y^2-bz^2=Q(x)R(x)$ (il en existe \cite{poonen2}),  puis consid\`ere la famille ${\mathbb U}_{u}$ \`a un param\`etre $u \in k^{\times}$
 de surfaces $U_{u}$ d'\'equation
 $y^2-buz^2=Q(x)R(x)$.
 
 Un point cl\'e est que l'ensemble des $u \in k^{\times}$ sans obstruction de Brauer-Manin pour l'existence de points rationnels sur ${\mathbb U}_{u}$
 est un ensemble diophantien, car d'apr\`es \cite{CTCoSa}, c'est l'image des points $k$-rationnels
 de  ${\mathbb U}_{u}$ par le $k$-morphisme  ${\mathbb U}_{u} \to \G_{m,k}$ donn\'e par la coordonn\'ee $u$.
 
 Dans le cas $r=p$ avec $p$ premier quelconque, on consid\`ere les vari\'et\'es  d\'efinies
 sur un corps de nombres $k$ contenant une racine $p$-i\`eme de $1$ par une \'equation
  $${\rm Norm}_{k((bu)^{1/p})/k}(\Xi)=P(x)$$
  avec $b \in k^{\times}$ fixe, $u$ variant dans $ k^{\times}$, et
 $P(x)$ un polyn\^ome s\'eparable produit de deux polyn\^omes 
$Q(x)$ et $R(x)$ de degr\'e $p$. Sous l'hypoth\`ese de Bouniakowsky-Dickson-Schinzel,
 Colliot-Th\'el\`ene et Swinnerton-Dyer
  \cite{CTSD} montrent que  l'obstruction de Brauer-Manin est la seule obstruction au principe de Hasse pour les points rationnels de telles vari\'et\'es. C'est ce qui a permis  \`a A.~V\'arilly-Alvarado et B. Viray \cite{vav}
  d'\'etendre, de fa\c con conditionnelle, l'argument de Poonen \`a tout entier $r$.
  
  L'article  \cite{CTSD} \'etablit aussi un r\'esultat inconditionnel : pour des vari\'et\'es du type ci-dessus,
  l'obstruction de Brauer-Manin \`a l'existence d'un z\'ero-cycle de degr\'e 1 est la seule obstruction \cite[Thm. 5.1]{CTSD}. C'est une g\'en\'eralisation d'un r\'esultat de Salberger \cite{salbergerinv} sur les surfaces fibr\'ees en coniques.
  
  L'id\'ee nouvelle du pr\'esent article  est d'utiliser une version \og effective\fg \ de ce r\'esultat (Corollaire {\ref{caspartCTSD}) : Il existe un entier $N$, premier \`a $p$, ind\'ependant de $u$,  avec la propri\'et\'e suivante.
  Pour $u \in k^{\times}$, sur un mod\`ele projectif et lisse convenable ${\mathbb X}_{u}$ d'une vari\'et\'e  ci-dessus, s'il n'y a pas
  d'obstruction de Brauer-Manin \`a l'existence d'un point rationnel, alors
  il existe un z\'ero-cycle effectif de degr\'e $N$ sur ${\mathbb X}_{u}$, ce qui se traduit par le fait
  que le produit sym\'etrique ${\rm Sym}^N{\mathbb X}_{u}$ poss\`ede un $k$-point.
  Cela permet de montrer que le compl\'ementaire de l'ensemble des points $u \in k^{\times}$
pour lesquels  ${\mathbb X}_{u}(\A_{k}) \neq \emptyset$ et  ${\mathbb X}_{u}(\A_{k})^{\Br}=\emptyset$  est un ensemble diophantien. Le reste de l'argument  (Th\'eor\`eme \ref{thprincipal})
  est alors comme dans \cite{poonen}.
 
Pour \'etablir  le r\'esultat d'effectivit\'e,   nous  reprenons les d\'emonstrations de \cite{CTSD}, dans la version plus souple d\'evelopp\'ee dans \cite{CTSkSD} (Th\'eor\`eme \ref{caspartCTSkSD} et Corollaire
\ref{caspartCTSD}). La m\'ethode donne  $N=2p+1$. Comme nous l'a signal\'e O.~Wittenberg au vu d'une pr\'ec\'edente version de cet article, un r\'esultat
d'effectivit\'e plus g\'en\'eral a d\'ej\`a \'et\'e obtenu, par la m\^eme m\'ethode, dans son article \cite{w}.

Si l'on admet un r\'esultat alg\'ebrique annonc\'e par Salberger dans sa th\`ese \cite{salbergerthese}, mais non publi\'e, on peut se dispenser de revisiter les d\'emonstrations des  articles \cite{CTSD} ou \cite{CTSkSD}, ou de citer \cite{w}. Cette m\'ethode alternative donne  $N=(p-1)^2$. Le r\'esultat de Salberger  \cite{salbergerthese} est d\'ecrit au \S 6, la variante de la d\'emonstration du
 th\'eor\`eme principal \'etant expos\'ee au \S 5.

\bigskip
\bigskip

{\bf Rappels et notations}

On note $A[n]$ le sous-groupe de $n$-torsion d'un groupe ab\'elien $A$.

 Soient $k$ un corps et $n$ un entier. On appelle extension  cyclique $K$ de $k$  de degr\'e $n$
  une $k$-alg\`ebre \'etale $K$ et une action de $G=\Z/n$ sur $K$ qui fait de $\Spec K \to \Spec k$ 
  un $G$-torseur. En particulier par extension cyclique de corps $K/k$, on entend dans ce texte une extension cyclique
  galoisienne de groupe $G=\Z/n$ muni du g\'en\'erateur $1 \in \Z/n$. Ceci d\'efinit une classe $\chi_{K/k} \in
  H^1(k,\Z/n)$. Si $n \neq 0 \in k$, \`a tout \'el\'ement $c \in k^{\times}$
  on associe sa classe dans $k^{\times}/k^{\times n} \oi H^1(k,\mu_{n})$.

  Pour $K/k$ une extension cyclique de degr\'e $n$ premier \`a la caract\'eristique de $k$,
  et $c \in k^{\times}$, 
  on dispose de l'alg\`ebre simple centrale  cyclique $(K/k,c)$ de degr\'e $n$, dont on note encore
  $(K/k,c)$ la classe dans le groupe de Brauer
$\Br k$,  qui est d\'efinie comme
  le cup-produit via  $$ H^1(k,\Z/n) \times H^1(k,\mu_{n}) \to H^2(k,\mu_{n})$$
  de la classe $\chi_{K/k}$ et de la classe de $c$ dans $k^{\times}/k^{\times n}$.
  
 Supposons que 
  $k$ contient une racine primitive $n$-i\`eme de $1$, soit $\zeta$.
  Le choix d'un isomorphisme $\Z/n \oi \mu_{n}$ permet d'identifier
  $H^{1}(k,\Z/n)=H^1(k,\mu_{n})=k^{\times}/k^{\times n}$ et $(\Br k )][n]=H^2(k,\mu_{n}) = H^2(k,\mu_{n}^{\otimes 2})$.
  \'Etant donn\'es $b,c \in k^{\times}$,  on note alors $(b,c)_{\zeta} \in (\Br k) [n]$ le cup-produit  des classes
   $b$ et $c$ dans $k^{\times}/k^{\times n} =H^1(k,\mu_{n})$. Compte tenu de l'identification 
   $\Z/n=\mu_{n}$, la
  $k$-alg\`ebre $k(b^{1/n}):=k[t]/(t^n-b)$
   est munie naturellement d'une structure d'extension cyclique de degr\'e $n$, et l'on a $(k(b^{1/n})/k,c)=(b,c)_{\zeta}  \in \Br k$.
 
La lecture du pr\'esent   article requiert  une certaine  familiarit\'e avec les articles \cite{CTSD}, \cite{CTSkSD}, \cite{poonen}.

\medskip

 \section{Alg\`ebre}\label{enfamille}

 Le lemme suivant est bien connu (cf. \cite[Chapitres 4 et  5]{GS}).
   \begin{lem}\label{connu}
  Soit $K/k$ une extension cyclique de corps de degr\'e $n$ premier \`a la caract\'eristique de  $k$.  Soit $c \in k^{\times}$. 
  
  (i) La $k$-vari\'et\'e affine  $Y$ 
  d'\'equation ${\rm Norm}_{K/k}(\Xi)=c$
  est un ouvert de la  $k$-vari\'et\'e de Severi-Brauer $X$  d'indice $n-1$
  attach\'ee \`a   l'alg\`ebre simple centrale $(K/k,c)$.

  (ii)  La $k$-vari\'et\'e  $Y$ poss\`ede un $k$-point si et seulement si la classe de $(K/k,c)$ est nulle dans $\Br k$.
  
  (iii) On a une suite exacte $$   \Z/n \to \Br k \to \Br X \to 0,$$
  o\`u $1 \in \Z/n$ a pour image $(K/k,c) \in \Br k$.
   \end{lem}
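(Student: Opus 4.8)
Le plan est de traiter les trois points dans l'ordre (i), (iii), puis (ii), ce dernier \'etant une cons\'equence rapide des pr\'ec\'edents et de faits classiques ; tout repose sur la th\'eorie des alg\`ebres cycliques et des vari\'et\'es de Severi--Brauer (\cite{GS}). On pose $A=(K/k,c)$, alg\`ebre simple centrale de degr\'e $n$, dans sa pr\'esentation standard $A=\bigoplus_{i=0}^{n-1}Ke^{i}$ avec $e^{n}=c$ et $e\lambda e^{-1}=\sigma(\lambda)$ pour $\lambda\in K$, o\`u $\sigma$ est le g\'en\'erateur de $\Gal(K/k)\cong\Z/n$ ; alors $X=\mathrm{SB}(A)$ est une $k$-vari\'et\'e projective et lisse de dimension $n-1$, $\ovk$-isomorphe \`a $\mathbb{P}^{n-1}$. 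On voit $Y$ comme la sous-vari\'et\'e ferm\'ee de l'espace affine $\mathbb{A}^{n}$ sous-jacent \`a $K$ d\'efinie par $\Norm_{K/k}(\Xi)=c$.

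Pour (i), je commencerais par observer que, $c$ \'etant non nul, tout $\Xi$ v\'erifiant $\Norm_{K/k}(\Xi)=c$ est dans $K^{\times}$, et que $Y$, muni de l'action $\lambda\cdot\xi=\lambda\xi$ du tore norme-un $R^{1}_{K/k}\G_{m}$ (de dimension $n-1$), en est un torseur : sur $\ovk$ ce torseur est trivial, donc $Y$ est lisse et g\'eom\'etriquement int\`egre de dimension $n-1$. Il reste \`a construire une immersion ouverte $Y\hookrightarrow X$. Sur $\ovk$, identifions $K\otimes_{k}\ovk$ \`a $\mathrm{Map}(\Z/n,\ovk)$, de coordonn\'ees $(\xi_{0},\dots,\xi_{n-1})$ ; la norme devient $\prod_{i}\xi_{i}$, de sorte que $Y_{\ovk}$ est le compl\'ementaire dans $\mathbb{P}^{n-1}_{\ovk}$ d'une r\'eunion de $n$ hyperplans en position g\'en\'erale, c'est-\`a-dire la carte torique standard $\G_{m}^{n-1}\subset\mathbb{P}^{n-1}$ ; le groupe $\Gal(K/k)$ permute cycliquement ces hyperplans et la relation $e^{n}=c$ introduit le facteur $c$ dans la donn\'ee de descente. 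On reconna\^it alors la donn\'ee de descente qui fait de $X$ la forme tordue ${}^{A}\mathbb{P}^{n-1}$, et la carte torique se tord en l'immersion ouverte souhait\'ee (une variante consiste \`a r\'ealiser $\mathrm{SB}(A)$ comme vari\'et\'e des id\'eaux \`a gauche de rang r\'eduit $1$ de $A$, cf.~\cite[Ch.~5]{GS}). Le point d\'elicat, et l'obstacle principal de toute la preuve, est pr\'ecis\'ement cette v\'erification des cocycles identifiant la donn\'ee de descente de $Y_{\ovk}\subset\mathbb{P}^{n-1}_{\ovk}$ au cocycle de l'alg\`ebre cyclique $A$ ; c'est ici que je m'appuierais le plus directement sur \cite[Ch.~4 et 5]{GS}.

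Pour (iii), j'\'ecrirais la suite exacte de basse dimension issue de la suite spectrale de Hochschild--Serre pour $\G_{m}$ sur $X$ (propre et g\'eom\'etriquement int\`egre) :
$$0\to\Pic X\to(\Pic X_{\ovk})^{\Gal}\to\Br k\to\Ker(\Br X\to\Br X_{\ovk})\to H^{1}(k,\Pic X_{\ovk}).$$
Comme $X_{\ovk}\cong\mathbb{P}^{n-1}_{\ovk}$, on a $\Pic X_{\ovk}=\Z$ \`a action galoisienne triviale, $\Br X_{\ovk}=0$ et $H^{1}(k,\Z)=0$, d'o\`u une suite exacte $0\to\Pic X\to\Z\by{\delta}\Br k\to\Br X\to 0$. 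Par le calcul de Lichtenbaum (cf.~\cite[Ch.~5]{GS}), $\delta$ envoie un g\'en\'erateur sur la classe $[A]=(K/k,c)$ (au signe pr\`es), donc $\mathrm{Im}(\delta)=\langle(K/k,c)\rangle$. Comme $n\cdot(K/k,c)=0$ (cup-produit par une classe de $H^{1}(k,\Z/n)$), l'application $\Z/n\to\Br k$ envoyant $1$ sur $(K/k,c)$ est bien d\'efinie, d'image $\langle(K/k,c)\rangle=\mathrm{Im}(\delta)=\Ker(\Br k\to\Br X)$, et $\Br k\to\Br X$ est surjective : c'est la suite voulue.

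Pour (ii), par d\'efinition de $Y$ et parce que $c\neq 0$, $Y(k)\neq\emptyset$ \'equivaut \`a $c\in\Norm_{K/k}(K^{\times})$, ce qui \'equivaut \`a la nullit\'e de $(K/k,c)$ dans $\Br k$ (th\'eor\`eme fondamental sur les alg\`ebres cycliques, cf.~\cite[Ch.~4]{GS}) ; pour le sens direct on peut aussi invoquer (i) et (iii), car si $Y(k)\neq\emptyset$ alors $X(k)\neq\emptyset$, d'o\`u $X\cong\mathbb{P}^{n-1}_{k}$, donc $\Pic X\to\Z$ est un isomorphisme et $\delta=0$.
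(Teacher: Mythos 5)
Votre plan est correct dans son ensemble et suit pour l'essentiel la m\^eme strat\'egie que le texte : pour (i), voir $Y$ comme torseur sous le tore $R^1_{K/k}\G_{m}$ des \'el\'ements de norme $1$ et le compactifier en une forme tordue de $\P^{n-1}$ identifi\'ee \`a la vari\'et\'e de Severi--Brauer de $(K/k,c)$ ; pour (iii), la suite exacte de Hochschild--Serre jointe au calcul de Lichtenbaum, qui est bien l'argument standard que le texte laisse implicitement \`a \cite{GS} ; pour (ii), le crit\`ere classique \og $(K/k,c)=0$ si et seulement si $c$ est une norme \fg. Vos points (ii) et (iii) sont exacts et m\^eme plus d\'etaill\'es que dans l'article.

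En revanche, une \'etape de votre point (i) ne tient pas telle quelle : la projectivisation na\"{\i}ve $\Xi\mapsto[\xi_{0}:\cdots:\xi_{n-1}]$ n'induit \emph{pas} une immersion ouverte de $Y$ dans $\P^{n-1}$. Sur $\ovk$, sa restriction \`a l'hypersurface $\prod_{i}\xi_{i}=c$ est un rev\^etement de degr\'e $n$ de la carte torique $\G_{m}^{n-1}\subset\P^{n-1}_{\ovk}$ (les $n$ scalaires $\lambda$ v\'erifiant $\lambda^{n}\prod_{i}\xi_{i}=c$ donnent le m\^eme point projectif) ; l'identification de $Y_{\ovk}$ au compl\'ementaire des $n$ hyperplans de coordonn\'ees n'est donc vraie que comme isomorphisme abstrait de vari\'et\'es, et la donn\'ee de descente qu'on lirait sur cette configuration via $\P(K)$ ne serait pas la bonne (elle donnerait une vari\'et\'e $k$-rationnelle). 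La fa\c con propre d'obtenir l'immersion ouverte --- c'est pr\'ecis\'ement ce que fait l'article --- consiste \`a transporter d'abord la structure de torseur de $T_{1}=R^{1}_{K/k}\G_{m}$ vers $T=R_{K/k}\G_{m}/\G_{m,k}$ au moyen de l'isomorphisme d\'etermin\'e par le g\'en\'erateur $\sigma$, \`a plonger $T$ toriquement dans $\P^{n-1}_{k}$, puis \`a former le produit contract\'e $X=Y\times^{T}\P^{n-1}_{k}$ ; la v\'erification de cocycles que vous signalez comme le point d\'elicat est alors remplac\'ee par la fonctorialit\'e du diagramme $H^{1}(k,T)\to H^{1}(k,PGL_{n,k})\to\Br k$ et par \cite[Cor. 4.7.4]{GS}, qui identifie l'image de la classe de $c$ \`a $(K/k,c)$. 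Votre variante par les id\'eaux \`a gauche de $A$ est une autre issue possible, mais il faut alors la mener \`a bien explicitement. Moyennant cette correction, le reste de votre argument est exact.
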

  
  \begin{proof} En utilisant \cite{GS}, on \'etablit ce lemme bien connu, \`a un point pr\`es.
 L'exercice \cite[Ex. 1]{GS}, o\`u il convient de remplacer $bx$ par $bx^n$,
donne un  \'enonc\'e d'\'equivalence birationnelle stable  au lieu de (i) ci-dessus.
Esquissons comment l'on obtient (i).  
On a les suites exactes de $k$-tores
$$ 1 \to R^1_{K/k}\G_{m} \to R_{K/k}\G_{m} \stackrel{\Norm_{K/k}}{\longrightarrow}  \G_{m,k} \to 1$$
et
$$ 1 \to \G_{m,k}\stackrel{x\mapsto x}{{\longrightarrow}}  R_{K/k}\G_{m} \to R_{K/k}\G_{m}/\G_{m,k } \to 1.$$
Notons $T_{1}=R^1_{K/k}\G_{m}$ et $T= R_{K/k}\G_{m}/\G_{m,k }$.
Le choix d'un g\'en\'erateur de $\Gal(K/k)$ d\'efinit un isomorphisme de $k$-tores $T_{1} \oi T$.
 La $k$-vari\'et\'e $Y$ est un espace principal homog\`ene sous le $k$-tore $T_{1}$.
 C'est donc aussi un espace principal homog\`ene sous le $k$-tore $T$.
  On a un plongement  torique, de $R_{K/k}\G_{m}$ dans $W:=R_{K/k}\G_{a} \setminus \{0\} \simeq \A^n_{k}  \setminus \{0\}$,
 qui induit un  plongement torique de $T$ dans le quotient de $W$ par l'action diagonale de $\G_{m,k}$,
 quotient qui s'identifie \`a l'espace projectif $\P^{n-1}_{k}$. La $k$-vari\'et\'e $Y$
 est donc un ouvert dans la $k$-vari\'et\'e  $X= Y\times^{T}\P^{n-1}_{k}$ quotient de $Y \times \P^{n-1}_{k}$
 par l'action diagonale de $T$. Cette $k$-vari\'et\'e $X$ est  une forme tordue de l'espace projectif.
 On a le diagramme commutatif de $k$-groupes
 $$
 \begin{CD}
  1 & \to & \G_{m,k}& \to & R_{K/k}\G_{m} & \to & T& \to & 1\cr
  && @VV{\rm id}V @VVV @VVV   \cr
  1 & \to & \G_{m,k} & \to & GL_{n,k} & \to &PGL_{n,k}& \to & 1.
  \end{CD}
  $$ 
  Ce diagramme induit un diagramme commutatif
 $$
 \begin{CD}  H^1(k, T)& \to & \Br k\cr
 @VVV @VV{\rm id}V \cr
   H^1(k, PGL_{n,k} )& \to &  \Br k.
    \end{CD}
  $$
  La fl\`eche $H^1(k,T) \to H^1(k,PGL_{n,k})$ envoie la classe de $Y$ sur la classe de $X$, et 
 le compos\'e $k^{\times}/N_{K/k}K^{\times} \oi  H^1(k,T_{1}) \oi H^1(k,T) \to \Br k$ envoie la classe 
  de $c$ sur la classe de l'alg\`ebre cyclique $(K/k,c)$ (cf. \cite[Cor. 4.7.4]{GS}). Quant \`a la fl\`eche
   $ H^1(k, PGL_{n,k} )\to  \Br k$, elle envoie la classe d'isomorphie d'une vari\'et\'e de Severi-Brauer 
   d'indice $n-1$ sur sa classe dans le groupe de Brauer.
   \end{proof}

\begin{prop}\label{modelecalculbrauer}
Soit $k$ un corps de caract\'eristique z\'ero.
 Soient  $p$ un nombre premier
 et  $P(x) \in k[x]$ un polyn\^ome s\'eparable de degr\'e $2p$. 
 Soit $K/k$ une extension galoisienne cyclique, de groupe $G=   \Z/p$.
 Soit $U$ la $k$-vari\'et\'e d\'efinie par
 $${\rm Norm}_{K/k}(\Xi)=P(x) \neq 0.$$

(i)  Il existe une $k$-compactification lisse $X$ de $U$, 
 \'equip\'ee d'un morphisme
  $\pi : X \to \P^1_{k}$ \'etendant l'application $(\Xi,x) \mapsto x$,
  dont la fibre g\'en\'erique $X_{\eta}/k(\P^1)$ est une vari\'et\'e de Severi-Brauer
  de dimension $p-1$, d'alg\`ebre simple centrale associ\'ee l'alg\`ebre cyclique $(K/k,P(x))$, telle
  que pour $M$ point ferm\'e de $\P^1_{k}$ non z\'ero de $P(x)$,
  la fibre $X_{M}$ est une vari\'et\'e de Severi-Brauer sur le corps r\'esiduel $k(M)$.

(ii) Si $K$ n'est pas un corps, alors  la $k$-vari\'et\'e $X$ est $k$-birationnelle \`a $\P^p_{k}$, et $\Br k = \Br X$.

(iii)  Supposons que $K$ est un corps, et que $P(x)=Q(x).R(x)$ avec $Q(x)$ et $R(x)$
irr\'eductibles de degr\'e $p$ et sans z\'ero dans $K$. Alors les alg\`ebres cycliques
$(K/k,Q(x))$ et $(K/k,R(x))$ dans $\Br k(x)$ ont une image dans $\Br k(X)$ qui appartient \`a $\Br X$,
et  le quotient de $\Br X$ par l'image de $\Br k$ est un groupe cyclique d'ordre $p$ engendr\'e par  l'image de $(K/k,Q(x))$.
\end{prop}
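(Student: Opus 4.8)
The plan is to treat (i)--(iii) in turn; (i) is standard, (ii) is immediate, and the real work is the Brauer-group computation in (iii), which I would do by combining residue (Faddeev) calculations on $\P^1_k$ with a local analysis of $\pi$ over the two branch points. For (i), applying Lemme~\ref{connu}(i) over $F:=k(x)=k(\P^1)$ identifies the generic fibre of $U$ with a dense open of the Severi--Brauer $F$-variety $X_\eta$ of dimension $p-1$ attached to the cyclic algebra $(K/k,P(x))$ over $F$. Next I would note that $(K/k,P(x))\in\Br k(x)$ has residue $v_M(P)\cdot\chi_{K/k}|_{k(M)}$ at a closed point $M$, hence is unramified on $V:=\P^1_k\setminus\{P=0\}$ (in particular $\infty\notin\{P=0\}$, since $v_\infty(P)=-2p\equiv0\pmod p$); by purity it is the class of a degree-$p$ Azumaya $\mathcal{O}_V$-algebra, whose associated Severi--Brauer scheme $\mathcal{X}_V\to V$ is smooth projective of relative dimension $p-1$, has generic fibre $X_\eta$, has Severi--Brauer $k(M)$-varieties as fibres over the closed points $M\in V$, and contains $U$ as a dense open (fibrewise $U$ is the complement of a hyperplane, by Lemme~\ref{connu}(i)). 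Since $\operatorname{char}k=0$, resolution of singularities applied to a projective $\P^1_k$-scheme containing $\mathcal{X}_V$, chosen to be an isomorphism above the regular locus, yields the desired smooth proper $X\supseteq\mathcal{X}_V$ with proper $\pi\colon X\to\P^1_k$ extending $(\Xi,x)\mapsto x$; only the fibres over $\{P=0\}$ remain uncontrolled.

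For (ii): a $\Z/p$-torsor $\operatorname{Spec}K\to\operatorname{Spec}k$ with $p$ prime is connected or split, so $K$ not a field forces $K\simeq k^p$, whence $(K/k,P(x))=0$ in $\Br F$, $X_\eta\simeq\P^{p-1}_F$, and $X$ is $k$-birational to $\P^{p-1}_k\times_k\P^1_k$, hence to $\P^p_k$; the Brauer group being a birational invariant of smooth proper varieties in characteristic $0$ and $\Br\P^p_k=\Br k$, I conclude $\Br X=\Br k$. For (iii), write $M_Q,M_R$ for the closed points of $\P^1_k$ defined by $Q,R$, so $k(M_Q)=k[x]/(Q)$ and $k(M_R)=k[x]/(R)$ have degree $p$ and $\{P=0\}=\{M_Q,M_R\}$. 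The hypothesis that $Q$ (resp. $R$) has no zero in $K$ says exactly that $K\otimes_k k(M_Q)$ (resp. $K\otimes_k k(M_R)$) is a field, i.e. $\chi_{M_Q}:=\chi_{K/k}|_{k(M_Q)}$ and $\chi_{M_R}:=\chi_{K/k}|_{k(M_R)}$ are nonzero of order $p$. Hence in $\Br k(x)$ the class $(K/k,Q(x))$ is ramified only at $M_Q$ (residue $\chi_{M_Q}$), $(K/k,R(x))$ only at $M_R$ (residue $\chi_{M_R}$); both have order $p$, and $(K/k,P(x))=(K/k,Q(x))+(K/k,R(x))$.

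Now I would show $\pi^*(K/k,Q(x))$ and $\pi^*(K/k,R(x))$ lie in $\Br X$. As $\pi^*(K/k,P(x))$ is the pull-back of the algebra of the Severi--Brauer bundle $X_\eta$, it vanishes in $k(X)=k(X_\eta)$ by Lemme~\ref{connu}(iii) over $F$; so $\pi^*(K/k,R(x))=-\pi^*(K/k,Q(x))$ and I need only treat $(K/k,Q(x))$. A class pulled back from $\Br k(x)$ is unramified along every horizontal divisor of $X$ (one dominating $\P^1_k$), and along any divisor over a point $M\neq M_Q$ since there $(K/k,Q(x))$ is already unramified. For a prime divisor $D$ over $M_Q$ the residue of $\pi^*(K/k,Q(x))$ along $D$ equals $e_D\cdot\chi_{M_Q}|_{k(D)}$, $e_D$ the multiplicity of $D$ in $X_{M_Q}$. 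The crucial local input is that $X$ can be chosen so that near $M_Q$ it is a standard regular model of the Severi--Brauer variety of the local cyclic algebra $(K/k,t)$ ($t$ a uniformizer), for which each component $D$ of $X_{M_Q}$ satisfies $p\mid e_D$ or $\chi_{M_Q}|_{k(D)}=0$ --- the residue character being absorbed by the component; for $p=2$ this is the classical fact that such a fibre is a pair of lines conjugate over $k(M_Q)$, and in general it is exactly the fibrewise analysis of \cite{CTSD}, \cite{CTSkSD}. Granting this (and its analogue over $M_R$), both classes lie in $\Br X$.

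For the quotient I would use that $\Br X\hookrightarrow\Br X_\eta$ and that, by Lemme~\ref{connu}(iii), every element of $\Br X_\eta$ is $\pi^*\beta$ for some $\beta\in\Br k(x)$. If $\pi^*\beta\in\Br X$, then restriction to a good fibre $X_M$ ($M\notin\{M_Q,M_R\}$), geometrically integral by (i), gives $(\partial_M\beta)|_{k(X_M)}=0$, hence $\partial_M\beta=0$ as $k(M)$ is algebraically closed in $k(X_M)$; so $\beta$ is ramified only at $M_Q,M_R$. The vanishing of $(\partial_{M_Q}\beta)|_{k(D)}$ along the components $D$ of $X_{M_Q}$ with $p\nmid e_D$ then forces $\partial_{M_Q}\beta\in\langle\chi_{M_Q}\rangle$ by inflation--restriction applied to the residue extension, and similarly $\partial_{M_R}\beta\in\langle\chi_{M_R}\rangle$. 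Subtracting suitable multiples of $(K/k,Q(x))$ and $(K/k,R(x))$ from $\beta$ makes it unramified everywhere on $\P^1_k$, hence in $\Br k$; therefore $\Br X$ is generated over $\Br k$ by $(K/k,Q(x))$ and $(K/k,R(x))$, which are opposite in $\Br k(X)$, so $\Br X/\Br k$ is cyclic generated by the image of $(K/k,Q(x))$. It has order exactly $p$, since $(K/k,Q(x))$ has order $p$ and does not lie in $\Br k+\langle(K/k,P(x))\rangle$ (compare residues at $M_Q$ versus $M_R$). The main obstacle is the local structure of $X_{M_Q}$ and $X_{M_R}$ --- the absorption of the residue character and the multiplicity bookkeeping --- which forces one either to build the model carefully and cite, or to reprove, the fibrewise analysis of \cite{CTSD} and \cite{CTSkSD}.
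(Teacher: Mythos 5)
Your proposal follows essentially the same route as the paper: the same model construction (Severi--Brauer fibration away from $P=0$, then resolution of singularities), the same residue/Faddeev computation over $\P^1_k$ using the exactness of $\Z/p\cdot(K/k,P(x))\to\Br k(x)\to\Br X_\eta\to 0$ and the geometric integrality of the good fibres, the same use of the hypothesis that $Q$ and $R$ have no zero in $K$ to pin down the order $p$, and the same deferral of the one genuinely delicate point --- that $\pi^*(K/k,Q(x))$ is unramified along the components of the fibres over $Q=0$ and $R=0$ --- to the fibrewise analysis of \cite[Thm.~2.2.1]{CTSD}. The argument is correct and complete to the same standard as the paper's own proof.
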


\begin{proof}\footnote{La 
 proposition \cite[Prop. 2.1]{vav} est incorrecte;  nous
  ne pouvons donc   citer \cite[\S 3]{vav}.}
On consid\`ere la $k$-vari\'et\'e d'\'equation
$${\rm Norm}_{K/k}(\Xi)=P(x)$$
et $U_{1}\subset U$ l'ouvert maximal sur lequel la projection $U_{1 }\to \A^1_{k}=\Spec k[x]$
d\'efinie par $x$ est lisse.  

 Soit $V$ la $k$-vari\'et\'e $V$
d'\'equation
$${\rm Norm}_{K/k}(\Theta) = P'(y)$$ 
avec $P'(y):=y^{2p}P(1/y)$.
Soit $V_{1} \subset V$ l'ouvert maximal sur lequel le morphisme $V_{1}    \to \A^1_{k}=\Spec k[y]$
d\'efini par $y$ est lisse.

On   recolle la $k$-vari\'et\'e $U_{1}$ et la $k$-vari\'et\'e $V_{1}$ en une $k$-vari\'et\'e $W$
  via $x=1/y$ et $y^2.\Xi= \Theta$.
On dispose donc d'un $k$-morphisme $W \to \P^1_{k}$ \`a fibres lisses, dont toutes
les fibres sauf celles au-dessus des points ferm\'es \`a support dans $P(x)=0$
sont g\'eom\'etriquement int\`egres.

Une variante du lemme \ref{connu} permet alors de construire 
une $k$-vari\'et\'e $W'$ lisse \'equip\'ee d'un morphisme $W' \to \P^1_{k}$
dont les fibres au-dessus des points ferm\'es autres que ceux support\'es dans $P(x)=0$
sont des vari\'et\'es de Severi-Brauer, et dont les fibres en les points ferm\'es support\'es
dans $P(x)=0$ 
sont les fibres de $W \to \P^1_{k}$.
En utilisant le th\'eor\`eme d'Hironaka, on obtient alors l'existence d'une $k$-vari\'et\'e projective 
$X$ munie d'un k-morphisme $\pi : X \to \P^1_{k}$, contenant $W'$ comme ouvert, le morphisme $\pi$
\'etendant $W' \to \P^1_{k}$.

L'\'enonc\'e (ii) est \'evident : si $K$ n'est pas un corps, alors ${\rm Norm}_{K/k}(\Xi)$
s'\'ecrit comme un produit de variables ind\'ependantes, et la $k$-vari\'et\'e d\'efinie par  l'\'equation
$$Y_{1} \dots Y_{p} = P(x) \neq 0$$
est clairement $k$-rationnelle.

L'\'enonc\'e (iii)  est  essentiellement d\'emontr\'e dans  \cite[\S 2.2, Thm. 2.2.1]{CTSD}.
 Nous donnons les points principaux de la d\'emonstration, renvoyant le lecteur
 \`a \cite[\S 1 et \S 2]{CTSD} pour des explications plus d\'etaill\'ees.
 
 On fixe  un g\'en\'erateur $\sigma$ de $\Gal(K/k)$, et on note  $(K/k,c)=(K/k,\sigma,c)$.
 
 On a la suite exacte  (Lemme \ref{connu}) 
 $$\Z/p.(\alpha) \to \Br k(\P^1)\stackrel{\pi^*}{\longrightarrow}  \Br X_{\eta} \to 0,$$
 o\`u $\alpha=(K(x)/k(x),P(x))$, qu'on note $(K/k,P(x))$.
Soit   $\gamma \in \Br X \subset  \Br X_{\eta}$. Il existe donc  $\beta \in  \Br k(\P^1)$
tel que $\gamma=\pi^*(\beta)$.
 
Les fibres de $\pi$ aux points ferm\'es de $\A^1_{k}=\Spec k[x]$ autres que ceux d\'efinis par $Q=0$
et $R=0$ sont lisses et g\'eom\'etriquement int\`egres. La comparaison des r\'esidus de $\gamma$ et de $\beta$  entra\^{\i}ne que
les r\'esidus de $\beta$ en tout point ferm\'e de  $\A^1_{k}$ autre que $Q=0$ et $R=0$
sont nuls.  La comparaison des r\'esidus de $\gamma$ et de $\beta$ au-dessus de  $Q=0$, resp. de $R=0$,  
sur l'ouvert $U_{1}\subset X$,
montre aussi que ${\rm Res}_{Q} (\beta) \in H^1(k_{Q},\Q/\Z)$,
resp. ${\rm Res}_{R} (\beta) \in H^1(k_{R},\Q/\Z)$, o\`u $k_{Q}$, resp. $k_{R}$, d\'esigne le corps r\'esiduel
en $Q=0$, resp. en $R=0$, s'annulent par passage \`a $K\otimes_{k}k_{Q}$, resp. $K\otimes_{k}k_{R}$.
Les groupes  $H^1(K\otimes_{k}k_{Q}/k_{Q},\Q/\Z) \simeq \Z/p$, et $H^1(K\otimes_{k}k_{R}/k_{R},\Q/\Z)\simeq \Z/p$
sont engendr\'es par l'image d'un g\'en\'erateur $\chi$  de $H^1(K/k,\Q/\Z)\simeq \Z/p$.
La $k$-alg\`ebre $(K/k,Q(x))$ a  sur $\A^1_{k}$ pour seul r\'esidu $\chi_{k_{Q}}$ en $Q=0$.
La $k$-alg\`ebre $(K/k,R(x))$ a  sur $\A^1_{k}$ pour seul r\'esidu $\chi_{k_{R}}$ en $R=0$.
On voit donc qu'il existe des entiers $r$ et $s$ tels que $\beta - (K/k,Q(x)^r) -(K/k,R(x)^s)$ ait tous 
ses r\'esidus triviaux sur $\A^1_{k}$, donc par la suite exacte de Faddeev  (cf. \cite[\S 1.2]{CTSD})
soit dans l'image de $\Br k$.
Comme la classe $\alpha= (K/k,P(x)) \in \Br k(\P^1)$ a une image nulle dans $\Br k(X)$, on voit
que $\gamma \in \Br X \subset \Br k(X)$ est la somme d'un multiple de $\pi^*(K/k,Q(x))$ et d'une classe
dans $\Br k$. Nous renvoyons \`a \cite[Thm. 2.2.1]{CTSD}  pour la d\'emonstration du fait que
$\pi^*(K/k,Q(x))$ appartient \`a $\Br X$, d\'emonstration qui utilise le fait que le degr\'e de $Q$ est $p$. 
Si  la classe $\pi^* (K/k,Q(x)) \in \Br k(X)$ appartenait \`a l'image de $\Br k$, d'apr\`es le lemme~\ref{connu}
il existerait un entier $r$ tel que $$(K/k,Q(x)) -r (K/k,P(x)) = (K/k,Q(x))
- r(K/k,Q(x)R(x)) \in \Br k(x)$$
 appartienne \`a $\Br k$.
 Le calcul du r\'esidu d'un tel \'el\'ement au point ferm\'e $R(x)=0$ donne $r=0$. Et le r\'esidu 
 de $(K/k,Q(x))$ en $Q(x)=0$ n'est pas nul. Ainsi $\Br X/\Br k$ est d'ordre $p$, engendr\'e
 par la classe $\pi^*(K/k,Q(x))$.
  Le calcul de r\'esidu a utilis\'e le fait
que ni $Q$ ni $R$ n'ont de z\'ero dans $K$.
\end{proof}

\section{Arithm\'etique}\label{arith}

\medskip

Soient $k$ un corps de nombres et  $\Omega$ l'ensemble de ses places.
Pour $v\in \Omega$, on note $k_{v}$ le compl\'et\'e de $k$ en $v$
et pour $v$ non archim\'edienne, $\F_{v}$ le corps r\'esiduel en $v$.

Comme mentionn\'e dans l'introduction, le th\'eor\`eme suivant, avec pr\'ecis\'ement la m\^eme borne $d\geq d_{0}$, est un cas particulier d'un  r\'esultat de Wittenberg \cite[Thm. 4.8]{w},
r\'esultat qui  est une cons\'equence de \cite[Thm. 4.1]{CTSkSD}.  Comme nous partons ici d'une hypoth\`ese
sur les points rationnels et non sur les z\'ero-cycles, la d\'emonstration que nous proposons est un peu plus simple que celle de \cite[Thm. 4.8]{w}.
Comme elle est courte, nous l'avons conserv\'ee.

\begin{theo}\label{caspartCTSkSD}
Soit  $X$ une $k$-vari\'et\'e projective, lisse, g\'eom\'etriquement int\`egre sur un corps de nombres $k$,
\'equip\'ee d'un morphisme projectif et  plat $\pi : X \to \P^1_{k}$ \`a fibre g\'en\'erique g\'eom\'etriquement int\`egre.
Soit $d_{0}$ la somme des degr\'es sur $k$ des points ferm\'es de $\P^1_{k}$ dont la fibre n'est pas lisse.

Supposons :

(i) Pour tout point ferm\'e $M \in \P^1_{k}$, de corps r\'esiduel $k(M)$, la fibre $X_{M}/k(M)$ contient une composante irr\'eductible $Z \subset X_{M}$, de multiplicit\'e 1,
  telle que la fermeture alg\'ebrique de $k(M)$ dans le corps des fonctions de $Z$
est une extension ab\'elienne de $k(M)$.

(ii) Il existe un ad\`ele dans $ X(\A_{k})$ orthogonal au groupe de Brauer vertical de $X$,
c'est-\`a-dire au sous-groupe de $\Br X$ dont l'image dans $\Br k(X)$ appartient \`a $\pi^*\Br k(\P^1)$.

(iii) Le principe de Hasse vaut pour les fibres lisses de $\pi$  au-dessus de tout point ferm\'e de $\P^1_{k}$.

Alors, pour tout entier $d \geq d_{0}$, et tout ouvert de Zariski non vide $V \subset \P^1_{k}$, il existe
un point ferm\'e $m \in V$ de degr\'e $d$ sur $k$,  \`a fibre  $X_{m}$ lisse contenant un $k(m)$-point rationnel.
\end{theo}
\begin{proof} 
On peut supposer que la fibre de $\pi$ au-dessus du point  \`a l'infini de $\P^1_{k}$ est lisse.
Soit $V \subset \A^1_{k}$ le compl\'ementaire de l'ensemble des points ferm\'es $M$  dont la fibre $X_{M}/k(M)$  n'est pas lisse.
Soit $U=\pi^{-1}(V)$. La projection $\pi: U  \to V$ est donc lisse. 
Il suffit de modifier la d\'emonstration de  \cite[Thm. 4.1]{CTSkSD}. Nous supposons que le lecteur
a le texte \cite{CTSkSD} sous la main.

\`A la page 19 de \cite{CTSkSD},   au lieu d'appliquer \cite[Thm. 3.2.2]{CTSD} aux z\'ero-cycles,
on applique  \cite[Thm. 3.2.1]{CTSD} aux points rationnels, c'est-\`a-dire le lemme formel d'Harari, comme c'est fait
dans \cite[(1.4)]{CTSkSD}. 
C'est-\`a-dire que l'on commence comme dans la d\'emonstration de   \cite[Thm. 1.1]{CTSkSD}, plus pr\'ecis\'ement comme au premier paragraphe de \cite[p. 7]{CTSkSD}.

Ceci donne des \'egalit\'es
$$ \sum_{v \in S_{1}}A_{i,j} (p_{v}) = 0 \in \Q/\Z,$$
avec des $p_{v} \in U(k_{v})$ et $S_{1}$ un ensemble fini de places contenant
l'ensemble $S$ des places ``de mauvaise r\'eduction''.

En utilisant le th\'eor\`eme des fonctions implicites, pour chaque place $v$  on peut trouver
des $p_{v,l} \in U(k_{v})$, $l=1, \dots,d$,  d'images respectives $m_{v,l}=\pi(p_{v,l})$, tous distincts
dans $V(k_{v})$, 
tels que $A_{i,j} (p_{v,l}) =A_{i,j} (p_{v}) \in \Br k_{v}$ pour tout $l$.  
 
 On reprend alors la d\'emonstration de \cite[Thm. 4.1]{CTSkSD}, page 19,  \`a la formule (4.3),
 en d\'efinissant pour  chaque $v\in S_{1}$ le  z\'ero-cycle $z_{v}^2$ sur $U$ 
  $$z_{v}^2 = \sum_{i=1}^d p_{v,l}.$$
 
On continue alors la d\'emonstration, avec des polyn\^omes $G_{v}(t)$ unitaires
de diviseur $\pi(z_{v}^2)$ sur $\A^1_{k_{v}}$. Ces polyn\^omes sont 
 de degr\'e $d$ (au lieu de $1+Dds$ avec les notations de \cite{CTSkSD}).
 Proc\'edant comme au troisi\`eme paragraphe de la page 20 de \cite{CTSkSD},  
en utilisant l'astuce de Salberger, comme d\'etaill\'ee au \S3 de \cite{CTSkSD}),
on trouve un polyn\^ome irr\'eductible $G(t) \in k[t]$, de degr\'e $d$, d\'efinissant
un point ferm\'e $m \in V$ dont la fibre $X_{m}/k(m)$ a des points dans tous les compl\'et\'es de $k(m)$, 
donc a un $k(m)$-point rationnel.
\end{proof}

Soit $k$ un corps parfait, $\overline{k}$ une cl\^oture s\'eparable de $k$.   
Pour $N> 0$ entier, et toute $k$-vari\'et\'e quasi-projective $U$,
on note ${\rm Sym}^NU$  le quotient de l'action du groupe sym\'etrique $\frak{S}_{N}$
sur le produit   $U^N$. On a une bijection
 entre les ensembles suivants :

(i) Les $k$-points de ${\rm Sym}^NU$.

(ii) Les   z\'ero-cycles effectifs de degr\'e $N$ sur $U$.

(iii) Les z\'ero-cycles effectifs de degr\'e $N$ sur $U\times_{k}{\overline k}$
qui sont invariants sous l'action de $ \Gal({\overline k}/k)$.

La bijection entre (ii) et (iii) est claire.
Pour  tout $k$-point de  ${\rm Sym}^NU$ on choisit un point $(P_{1}, \dots, P_{N}) \in U^N(\overline{k})$.
Le z\'ero-cycle $P_{1}+\dots+P_{N}$ est invariant sous $ \Gal({\overline k}/k)$.
Inversement si  $P_{1}+\dots+P_{N}$ est un z\'ero-cycle  effectif de degr\'e $N$ sur $U\times_{k}{\overline k}$
 invariant sous l'action de $ \Gal({\overline k}/k)$, alors  l'image de $(P_{1}, \dots, P_{N}) \in U^N(\overline{k})$
 dans  $({\rm Sym}^NU)({\overline k})$ est invariante sous $ \Gal({\overline k}/k)$, donc 
 d\'efinit un $k$-point de ${\rm Sym}^NU$.

\begin{cor}\label{caspartCTSD}
  Soit $k$ un corps  de nombres. Soit $K/k$ une extension finie cyclique de corps, de degr\'e $p$.
  Soit $P(x)=Q(x)R(x)$ un polyn\^ome s\'eparable de degr\'e $2p$ produit de deux polyn\^omes irr\'eductibles
  de degr\'e $p$. Soit $U=U(K/k,P)$ la $k$-vari\'et\'e affine,  lisse, int\`egre d\'efinie par
  l'\'equation
  $$ {\rm Norm}_{K/k}(\Xi)= P(x) \neq 0.$$
Soit $X$ une $k$-vari\'et\'e projective et lisse, g\'eom\'etriquement int\`egre, contenant $U$
  comme ouvert dense. 
  S'il existe un \'el\'ement 
   $\{M_{v}\}$ 
  du produit $\prod_{v \in \Omega} U(k_{v})$ orthogonal \`a $(K/k,Q(x))  \in \Br X$,
  alors il existe une extension de corps $L/k$ de degr\'e $2p+1$
  avec $U(L) \neq \emptyset$. En particulier  les $k$-vari\'et\'es ${\rm Sym}^{2p+1}U$ et ${\rm Sym}^{2p+1}X$
     poss\`edent un $k$-point.
 \end{cor}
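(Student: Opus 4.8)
Le plan est d'appliquer le th\'eor\`eme~\ref{caspartCTSkSD} avec $d=2p+1$ \`a la fibration $\pi : X\to\P^1_{k}$ associ\'ee \`a $U$ par la proposition~\ref{modelecalculbrauer}. On peut prendre pour $X$ le mod\`ele fourni par cette proposition : les points locaux $M_{v}$ vivent dans l'ouvert $U$, commun \`a toutes les compactifications projectives lisses de $U$, et la classe $(K/k,Q(x))$ provient de $\Br k(x)$, de sorte que l'hypoth\`ese d'orthogonalit\'e ne d\'epend pas du mod\`ele choisi ; par ailleurs un z\'ero-cycle effectif de degr\'e $2p+1$ sur $U$ en fournit un, par image directe, sur toute compactification projective de $U$, ce qui permettra de conclure pour le $X$ de l'\'enonc\'e. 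On a $U(k_{v})\subset X(k_{v})$ pour tout $v$, donc $\{M_{v}\}\in X(\A_{k})$ ; la $k$-vari\'et\'e $U$ \'etant g\'eom\'etriquement int\`egre, $X$ l'est aussi ; et $\pi$ est projectif, et plat puisque $X$ est r\'egulier de dimension $p$, $\P^1_{k}$ r\'egulier, et toutes les fibres de dimension $p-1$ (la fibre g\'en\'erique, vari\'et\'e de Severi-Brauer, est de dimension $p-1$, la dimension des fibres est semi-continue sup\'erieurement, et aucune fibre n'a de composante de dimension $p$ puisque $X$ est irr\'eductible).

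On v\'erifie alors les trois hypoth\`eses du th\'eor\`eme~\ref{caspartCTSkSD}. Pour (i) : au-dessus d'un point ferm\'e $M$ de $\P^1_{k}$ qui n'est pas z\'ero de $P(x)$ (y compris le point \`a l'infini), la fibre $X_{M}$ est une vari\'et\'e de Severi-Brauer sur $k(M)$ par la proposition~\ref{modelecalculbrauer}(i), donc g\'eom\'etriquement int\`egre, et $Z=X_{M}$ convient, l'extension des corps constants \'etant triviale ; au-dessus d'un point ferm\'e $M$ support\'e dans $Q=0$, resp.\ $R=0$, la description de la vari\'et\'e des normes d\'eg\'en\'er\'ee (cf.\ \cite[\S 2]{CTSD}) montre que $X_{M}$ poss\`ede une composante r\'eduite $Z$ dont les composantes g\'eom\'etriques sont permut\'ees simplement transitivement par $\Gal(K/k)$, de sorte que la fermeture alg\'ebrique de $k(M)$ dans le corps des fonctions de $Z$ est $K\otimes_{k}k(M)$, extension cyclique, donc ab\'elienne, de $k(M)$ (on suppose ici, comme dans la proposition~\ref{modelecalculbrauer}(iii), que $Q$ et $R$ n'ont pas de racine dans $K$). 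Pour (ii) : par la proposition~\ref{modelecalculbrauer}(iii), $\Br X$ est engendr\'e par l'image de $\Br k$ et la classe $(K/k,Q(x))$ ; l'ad\`ele $\{M_{v}\}$ est orthogonal \`a $(K/k,Q(x))$ par hypoth\`ese et \`a $\Br k$ par la loi de r\'eciprocit\'e (la somme des invariants locaux d'une classe constante est nulle), donc \`a $\Br X$ tout entier, a fortiori au groupe de Brauer vertical. Pour (iii) : les fibres au-dessus des z\'eros de $P(x)$ ne sont pas lisses (cf.\ \cite[\S 2]{CTSD}), donc les seules fibres lisses au-dessus de points ferm\'es sont des vari\'et\'es de Severi-Brauer, pour lesquelles le principe de Hasse vaut sur un corps de nombres (Albert-Brauer-Hasse-Noether). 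Enfin $d_{0}$, somme des degr\'es sur $k$ des points ferm\'es \`a fibre non lisse, v\'erifie $d_{0}\leq\deg P=2p<2p+1=d$.

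Le th\'eor\`eme~\ref{caspartCTSkSD}, appliqu\'e avec ce $d$ et avec $V$ le compl\'ementaire dans $\P^1_{k}$ des z\'eros de $P(x)$, fournit un point ferm\'e $m\in V$ de degr\'e $2p+1$ sur $k$ dont la fibre $X_{m}$ est une vari\'et\'e de Severi-Brauer sur $L:=k(m)$ poss\'edant un $L$-point, donc isomorphe \`a $\P^{p-1}_{L}$. Comme $L$ est infini, on choisit un $L$-point de l'ouvert dense $U_{m}=U\cap X_{m}$ de $X_{m}$ ; vu dans $U$, c'est un point ferm\'e de corps r\'esiduel $L$, d'o\`u $U(L)\neq\emptyset$ avec $[L:k]=2p+1$. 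Ce point ferm\'e de $U$ est un z\'ero-cycle effectif de degr\'e $2p+1$ sur $U$, dont l'image directe sur $X$ est un z\'ero-cycle effectif de degr\'e $2p+1$ sur $X$ ; par la correspondance entre $k$-points de ${\rm Sym}^{N}$ et z\'ero-cycles effectifs de degr\'e $N$ rappel\'ee ci-dessus, ${\rm Sym}^{2p+1}U$ et ${\rm Sym}^{2p+1}X$ poss\`edent un $k$-point.

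Le point principal est l'analyse des fibres sous-jacente aux hypoth\`eses (i) et (iii) : identifier la composante r\'eduite des fibres au-dessus des z\'eros de $P(x)$ et son extension cyclique des corps constants, et v\'erifier que ces fibres ne sont pas lisses, de sorte que le principe de Hasse ne soit invoqu\'e que pour des vari\'et\'es de Severi-Brauer. C'est l'ingr\'edient import\'e de \cite{CTSD} (et de \cite{CTSkSD}) ; tout le reste est formel ou r\'esulte de la combinaison des propositions~\ref{connu} et~\ref{modelecalculbrauer} avec le th\'eor\`eme~\ref{caspartCTSkSD}.
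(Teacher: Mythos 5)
Votre démonstration est correcte et suit exactement la même voie que celle de l'article : on se ramène au modèle fourni par la proposition~\ref{modelecalculbrauer}, on vérifie les hypothèses du théorème~\ref{caspartCTSkSD} (avec $d_{0}\leq 2p$) et on l'applique avec $d=2p+1$. Vous ne faites qu'expliciter en détail ce que l'article résume par \og les hypothèses du théorème~\ref{caspartCTSkSD} sont clairement satisfaites\fg{} (structure des fibres dégénérées, réciprocité pour $\Br k$, principe de Hasse pour les variétés de Severi-Brauer, invariance birationnelle), ce qui est fidèle et sans lacune.
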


\begin{proof} 
D'apr\`es la proposition \ref{modelecalculbrauer},   il existe un tel mod\`ele $X$.
Les propri\'et\'es d'invariance birationnelle du groupe de Brauer montrent que l'\'enonc\'e
ne d\'epend pas du choix du mod\`ele. D'apr\`es la proposition \ref{modelecalculbrauer},   les classes 
   $(K/k,Q(x))_{k(X)}$ et  $(K/k,R(x))_{k(X)}$ sont chacune dans $\Br X$,
leur somme dans $\Br k(X)$ est dans l'image de $\Br k$, et ce sont des g\'en\'erateurs
de $\Br X$. Les hypoth\`eses du th\'eor\`eme \ref{caspartCTSkSD} sont clairement satisfaites,
avec $d_{0}=2p$. On choisit ici $d=2p+1$.
\end{proof}

\begin{rema} Dans le  cadre du corollaire, les seuls $A_{i,j}$ intervenant dans la preuve du
th\'eor\`eme \ref{caspartCTSkSD} sont  $(K/k,Q(x))_{k(X)}$ et $(K/k,R(x))_{k(X)}$,
et ces \'el\'ements de $\Br k(X)$ sont ici dans $\Br X$.
On n'a   donc pas  besoin ici d'invoquer le lemme formel d'Harari, l'hypoth\`ese
donne les \'egalit\'es
$$ \sum_{v \in S_{1}}A_{i,j} (M_{v}) = 0 \in \Q/\Z,$$
avec $M_{v}$ comme dans l'\'enonc\'e du corollaire, et
avec $S_{1}$ \'egal \`a l'ensemble fini $S$  \'evident des places de mauvaise r\'eduction.

Si l'on se limite \`a $p$ premier impair, ce qui suffirait ici, on peut aussi
\'eviter les difficult\'es sp\'ecifiques aux   places r\'eelles (voir \`a ce sujet \cite[p. 82, l. 6/8]{CTSD} et \cite[p. 17, l. 9/11]{CTSkSD}).
\end{rema}

\begin{prop}\label{exemple}
Soient  $p$ un nombre premier   et $k$ un corps de nombres  contenant une racine
primitive $p$-i\`eme de l'unit\'e $\zeta$. 
 Il existe $d \in k^{\times}$, $d \notin k^{\times p} $ et
 un polyn\^ome s\'eparable $P(x)=Q(x)R(x) \in k[x]$
avec $Q(x)$ et $R(x)$ irr\'eductibles de degr\'e $p$, sans z\'ero dans $K=k(d^{1/p})$, 
tels que toute $k$-vari\'et\'e $X$ projective, lisse, g\'eom\'etriquement connexe
$k$-birationnelle \`a la $k$-vari\'et\'e $U=U(K/k,P)$ d'\'equation affine
$$ {\rm Norm}_{K/k}(\Xi)= P(x) \neq 0$$
satisfasse :

(i) $X$ poss\`ede des points rationnels dans tous les compl\'et\'es $k_{v}$ de $k$.

(ii) $X$ ne poss\`ede pas  de z\'ero-cycle de degr\'e 1. Plus pr\'ecis\'ement, l'alg\`ebre
 $A=(K/k,Q(x))=(d,Q(x))_{\zeta}  \in \Br k(X)$ appartient \`a $\Br X$ et, pour toute famille 
 $\{z_{v}\}, v \in \Omega$, de z\'ero-cycles de degr\'e 1 sur $X$, on a
 $$\sum_{v \in \Omega} A(z_{v}) \neq 0 \in \Q/\Z.$$
 
 (iii)   On a $({\rm Sym}^{2p+1}U)(k)=\emptyset$ et  $({\rm Sym}^{2p+1}X)(k)=\emptyset$.
 
 \end{prop}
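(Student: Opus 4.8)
Le plan est de construire le couple $(d,P=QR)$, puis d'en déduire (i), (ii) et (iii), la dernière assertion étant l'ingrédient nouveau. Je fixerais d'abord un modèle $X$ de $U=U(K/k,P)$ comme à la proposition~\ref{modelecalculbrauer}~: son point (iii) donne alors que $A=(K/k,Q(x))=(d,Q(x))_{\zeta}$ appartient à $\Br X$ et a une image non nulle dans $\Br X/\Br k\simeq\Z/p$, donc est d'ordre exactement $p$ dans $\Br X$~; dans ce qui suit, on regarde la valeur $A(z_{v})\in\Q/\Z$ comme invariant local d'un zéro-cycle, comme dans l'énoncé de (ii). Par invariance birationnelle du groupe de Brauer d'une $k$-variété projective et lisse et par le lemme de Lang--Nishimura (pour la solubilité locale), il suffirait d'établir (i), (ii) et (iii) pour ce modèle précis~: pour une autre $k$-variété $X'$ projective, lisse, $k$-birationnelle à $U$, on a encore $A\in\Br X'$ et $X'(k_{v})\neq\emptyset$ pour tout $v$, et toute famille de zéro-cycles locaux de degré $1$ sur $X'$ se ramène, en remplaçant chaque $z_{v}$ par un zéro-cycle rationnellement équivalent supporté dans l'ouvert commun à $X$ et $X'$ (ce qui ne change ni le degré ni la valeur de $A$), à une famille analogue sur $X$ de même somme $\sum_{v}A(z_{v})$.

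Pour la construction et les points (i)--(ii), je suivrais le cas $p=2$ de Poonen~\cite{poonen2} (cf.\ aussi \cite{CTCoSa} et, pour $p$ quelconque, \cite{vav} compte tenu de la correction signalée en note), en m'appuyant sur les calculs locaux de résidus et d'invariants de \cite[\S\S 1--2]{CTSD}. On choisirait $d\in k^{\times}\setminus k^{\times p}$ tel que $K=k(d^{1/p})/k$ ne soit ramifiée qu'en un ensemble fini $S$ de places (contenant les places archimédiennes, celles au-dessus de $p$, et que l'on peut supposer assez gros), puis une place distinguée $v_{0}\in S$~; puis, par approximation sur les coefficients des polynômes unitaires $Q,R$ de degré $p$, on les choisirait irréductibles sur $K$, de discriminants et de résultant des $S$-unités, de sorte que~: hors de $S$, la variété $X$ a bonne réduction, donc un $k_{v}$-point (Lang--Weil et Hensel), et la condition $v\bigl(Q(x_{v})R(x_{v})\bigr)\equiv 0\pmod p$ (nécessaire pour un point local de $x$-coordonnée $x_{v}$), jointe au fait que les zéros de $Q$ et de $R$ restent distincts modulo $v$, force $A(z_{v})$ à s'annuler sur tout zéro-cycle local de degré $1$~; aux places de $S\setminus\{v_{0}\}$, des conditions $v$-adiques convenables assurent solubilité locale et annulation de $A$ sur ces mêmes zéro-cycles~; en $v_{0}$, enfin, $X(k_{v_{0}})\neq\emptyset$ mais la condition de solubilité locale confine $x$ dans un domaine où $(d,Q(x))_{\zeta}$ a un invariant local constant $c_{0}\neq 0$, uniformément sur toute extension finie de $k_{v_{0}}$, de sorte que $A(z_{v_{0}})=c_{0}$ pour tout zéro-cycle local de degré $1$. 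C'est le phénomène de l'exemple classique d'Iskovskikh--Poonen ($p=2$, $k=\Q$, $d=-1$, $Q=3-t^{2}$, $R=t^{2}-2$, $v_{0}=2$~: tout $t\in\Q_{2}$ rendant $(3-t^{2})(t^{2}-2)$ norme de $\Q_{2}(i)$ est pair, d'où $3-t^{2}\equiv 3\pmod 4$ et $(-1,3-t^{2})_{2}=\tfrac12$). Comme $\sum_{v\in S}c_{v}=c_{0}\neq 0$, on obtiendrait à la fois (i) et (ii)~: $A\in\Br X$, et pour toute famille $\{z_{v}\}$ de zéro-cycles locaux de degré $1$, $\sum_{v}A(z_{v})=c_{0}\neq 0\in\Q/\Z$.

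Le point (iii) serait alors formel. Supposons $({\rm Sym}^{2p+1}X)(k)\neq\emptyset$, c'est-à-dire qu'il existe un zéro-cycle effectif, donc un zéro-cycle $z$, de degré $2p+1$ sur $X$. Pour chaque $v\in\Omega$, soit $z_{v}$ son image sur $X_{k_{v}}$ et $M_{v}\in X(k_{v})$ un point local (il en existe par (i)). Le zéro-cycle $z'_{v}:=z_{v}-2p\,M_{v}$ sur $X_{k_{v}}$ est de degré $(2p+1)-2p=1$~; comme $A$ est d'ordre $p$, on a $A(M_{v})\in\tfrac1p\Z/\Z$, donc $2p\,A(M_{v})=0$ et $A(z'_{v})=A(z_{v})-2p\,A(M_{v})=A(z_{v})$. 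La loi de réciprocité du corps de classes appliquée au zéro-cycle global $z$ donne $\sum_{v}A(z_{v})=0\in\Q/\Z$, donc $\sum_{v}A(z'_{v})=0$, en contradiction avec (ii). D'où $({\rm Sym}^{2p+1}X)(k)=\emptyset$~; et comme tout zéro-cycle effectif de degré $2p+1$ supporté par l'ouvert $U\subset X$ en est un sur $X$, on a aussi $({\rm Sym}^{2p+1}U)(k)=\emptyset$.

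L'obstacle principal est le premier point. L'entier $2p+1$ de (iii) est exactement $d_{0}+1$ avec $d_{0}=2p$ comme au corollaire~\ref{caspartCTSD}~: il doit à la fois être $\geq d_{0}+1$ (pour qu'en l'absence d'obstruction de Brauer--Manin le corollaire~\ref{caspartCTSD} fournisse un zéro-cycle effectif de ce degré) et $\equiv 1\pmod p$ (pour que l'argument ci-dessus convertisse l'obstruction sur les zéro-cycles de degré $1$ en obstruction sur ceux de degré $2p+1$), coïncidence dont la preuve du théorème~\ref{thprincipal} a précisément besoin. Le cœur technique est donc de réaliser en $v_{0}$ une configuration locale forçant l'invariant de $A$ à une valeur constante non nulle — non seulement sur $X(k_{v_{0}})$ mais sur les zéro-cycles locaux de degré $1$, donc sur toutes les extensions finies de $k_{v_{0}}$ — sans détruire la solubilité locale aux autres places~; pour $p=2$ c'est l'exemple d'Iskovskikh--Poonen, et pour $p$ premier quelconque l'ingrédient se tire des calculs locaux de \cite{CTSD}.
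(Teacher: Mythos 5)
Votre architecture est la bonne et deux des trois points sont correctement trait\'es : la r\'eduction \`a un mod\`ele particulier (invariance birationnelle du groupe de Brauer et lemme de d\'eplacement pour les z\'ero-cycles) est celle du texte, et votre d\'eduction de (iii) \`a partir de (ii) --- remplacer $z_v$ par $z_v-2p\,M_v$, de degr\'e $1$, sans changer $A(z_v)$ puisque $pA=0$, puis invoquer la r\'eciprocit\'e --- est valable ; le texte proc\`ede de fa\c con \'equivalente en remarquant que $U$ a un point dans une extension de degr\'e $p$, donc un z\'ero-cycle de degr\'e $p$, et que $p$ et $2p+1$ sont premiers entre eux. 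En revanche, le c\oe ur de la proposition, \`a savoir l'existence effective d'un couple $(d,P=QR)$ r\'ealisant simultan\'ement la solubilit\'e locale partout et le confinement de l'invariant en une place $v_0$, n'est pas d\'emontr\'e : vous en donnez un cahier des charges (\og on choisirait\fg, \og des conditions $v$-adiques convenables assurent\fg) et renvoyez \`a \cite{poonen2}, \cite{vav} et \cite{CTSD}, mais c'est pr\'ecis\'ement cette construction qui constitue la d\'emonstration. Le texte la donne explicitement : $a,b\in O_k$ engendrant des id\'eaux premiers distincts, totalement positifs, $\equiv 1 \bmod (1-\zeta)^{2p-1}O_k$, de corps r\'esiduels assez gros, avec $a\notin k_{b}^{\times p}$, puis $d=ab$, $Q=x^p+c$, $R=ax^p+ac+1$ o\`u $b\mid ac+1$ ; l'identit\'e $R-aQ=1$ et la congruence modulo $(1-\zeta)^{2p-1}$ sont les deux ressorts de tous les calculs locaux, et rien dans votre plan ne garantit qu'un choix \og par approximation\fg\ les reproduise.

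Un point de votre esquisse est de plus litt\'eralement faux tel qu'\'enonc\'e : l'invariant de $(d,Q(x))_{\zeta}$ ne peut pas \^etre \og constant $c_0\neq 0$ uniform\'ement sur toute extension finie $L$ de $k_{v_0}$\fg, puisqu'il s'annule d\`es que $L$ contient $k_{v_0}(d^{1/p})$. Ce qui est vrai, et ce dont on a besoin, est que pour tout $P\in U(L)$ la classe $A(P)\in\Br L$ est la restriction d'une classe fixe non nulle $\alpha_0=(b^{-1},a)_{\zeta}\in\Br k_{v_0}$ ; comme la corestriction compos\'ee avec la restriction est la multiplication par le degr\'e, la valeur de $A$ sur un z\'ero-cycle $z=\sum_i n_iP_i$ est $\bigl(\sum_i n_i[k(P_i):k_{v_0}]\bigr)\alpha_0={\rm deg}(z)\,\alpha_0$, donc vaut $\alpha_0\neq 0$ en degr\'e $1$. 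C'est exactement le calcul final de la d\'emonstration du texte, et c'est aussi la raison pour laquelle, comme le rappelle \cite[\S 10]{CTSD}, l'obstruction pour les z\'ero-cycles ne se d\'eduit pas formellement de celle pour les points rationnels : il faut contr\^oler $A(P)$ comme \'el\'ement de $\Br L$ pour toute extension finie $L/k_{v_0}$, et non son seul invariant.
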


\begin{proof} Si une $k$-vari\'et\'e $X$ satisfait (i) et (ii), toute autre
$k$-vari\'et\'e projective, lisse, g\'eom\'etriquement connexe $k$-birationnelle \`a $X$
satisfait (i) et (ii).

L'\'enonc\'e (iii) est une cons\'equence imm\'ediate de l'\'enonc\'e (ii).
En effet $U$ poss\`ede de fa\c con \'evidente un point dans une extension de degr\'e $p$.

Poonen \cite[\S 5, Prop. 5.1]{poonen2} (cas $p=2$) 
puis de fa\c con semblable  V\'arilly-Alvarado et Viray \cite[Prop. 4.1]{vav} (cas $p$ impair)
construisent (au moins biratonnellement) une telle vari\'et\'e $X$ avec des points rationnels dans tous les $k_{v}$ mais telle
qu'il y ait obstruction de Brauer-Manin \`a l'existence d'un point rationnel. Nous allons montrer que
  leur exemple satisfait aussi l'\'enonc\'e analogue pour les z\'ero-cycles de degr\'e~1, \`a savoir 
  l'\'enonc\'e (ii) de la proposition.
  Ceci ne r\'esulte pas formellement du cas des points rationnels   (cf. \cite[\S 10]{CTSD}).
  
  Soit  $M\in \N$ un entier tel que toute courbe plane projective lisse (et donc g\'eom\'etriquement int\`egre)
 de degr\'e $p$  sur un corps fini $\F$  de cardinal au moins $M$
poss\`ede au moins $2p+1$ points $\F$-rationnels. 
 
En utilisant le th\'eor\`eme de densit\'e de Chebotarev et la th\'eorie du corps de classes, 
on trouve  $a, b,c \in k$ avec les propri\'et\'es suivantes: 
\begin{itemize}
\item[(1)] $b\in O_k$, $bO_k$ id\'eal  premier, $b\equiv 1\bmod (1-\zeta)^{2p-1}O_k$, $b>>0$  et $\#\F_b>M$,
\item[(2)] $a\in O_k$, $aO_k$   id\'eal premier distinct de $bO_k$, $a \equiv 1\bmod (1-\zeta)^{2p-1}O_k$, $a \not\in k_{b}^{\times p}$, $a>>0$ et $\#\F_a>M$.
\item[(3)] $c \in O_k$, $b\; | \; ac+1$.
\end{itemize}
Soient  $d=ab$ et $K$ le corps $k(d^{1/p})$.   Soient  $Q(x)=x^p+c$ et $R(x)=ax^p+ac+1$.

Soit $X$ une $k$-vari\'et\'e projective, lisse, g\'eom\'etriquement connexe et $k$-birationnelle \`a la $k$-vari\'et\'e
 $U$
d'\'equation 
$$ \Norm_{K/k}(\Xi)= Q(x) R(x)\neq 0.$$

\medskip

Suivant  \cite[Lemma 5.3]{poonen2},
montrons que l'on a 
  $U(k_{v})\neq \emptyset$  pour toute place~$v$.

Comme on a $d>>0$, on a $U(k_{v})\neq \emptyset$ pour toute place archim\'edienne.

Soit $v$ une place finie de $k$ diff\'erente de  $v=v_{a}$, $v=v_{b}$ et telle que $v(p)=0.$
 L'extension $k(d^{1/p})/k$ est non ramifi\'ee en $v$.  Pour tout $x \in k_{v}$ avec $v(x)<0$,
 on a $v(P(x))= 2p$, on a donc des points dans $U(k_{v})$ avec un tel $x$. 
 
 Soit $v$ une place avec $v(p)>0$. L'entier $d$ satisfait $d=ab \equiv 1\bmod (1-\zeta)^{2p-1}O_k$.
 Soit $h(x)=x^p-d$. Les \'el\'ements $p$ et $(1-\zeta)^{p-1}$ diff\`erent par une unit\'e dans $\Q(\zeta)$.
 On en d\'eduit $v(h(1))>2v(h'(1))$, o\`u $h'(x)=px^{p-1}$ est le polyn\^ome d\'eriv\'e de $h(x)$.
 Le lemme de Hensel assure 
 alors l'existence d'une solution (enti\`ere et congrue \`a $1$ modulo $v$) de $x^p-d=0$ dans $k_{v} $.
 On a donc   $U(k_{v})\neq \emptyset$.

 Soit $v=v_{b}$. Alors  $a$ et $c$ sont des  unit\'es dans $k_{v}$. 
Les hypoth\`eses
assurent que
 la courbe  affine lisse $z^p=a(x^p+c)\neq 0$ sur le corps $\F_{v}$ poss\`ede un
 point $\F_{v}$-rationnel. Il en est donc de m\^eme de la courbe $z^p=(x^p+c) (ax^p+ac+1) \neq 0$
 puisque $v_{b} (ac+1)>0$.
 Par le lemme de Hensel, un tel point
 se rel\`eve en un point  de
$z^p=(x^p+c)(a(x^p+c)+1) \neq 0$ dans $k_{v }$. 
Ainsi  $U(k_{v}) \neq \emptyset$.

Soit $v=v_{a}$. 
Sur le corps $\F_{v} $, on trouve une solution de $z^p=x^p+c\neq 0,$ 
donc une solution de $z^p=(x^p+c)(a(x^p+c)+1) \neq 0$. Une telle solution se rel\`eve en une solution de
$z^p=(x^p+c)(a(x^p+c)+1) \neq 0$ dans $k_{v }$. On a donc $U(k_{v }) \neq \emptyset$.
 
\medskip

D'apr\`es la proposition \ref{modelecalculbrauer}, l'alg\`ebre cyclique $A=(K/k,Q(x))=(d,Q(x))_{\zeta}$ appartient \`a $\Br(X)$.

Soit $v$ une place de $k$ et soit $L/k_{v}$ une extension finie de corps, et $w$ la valuation sur $L$.
L'application $ev_{A} : X(L) \to \Br L\subset \Q/\Z$  obtenue par \'evaluation de $A$ est continue. Par le th\'eor\`eme des fonctions implicites,
son image est la m\^eme que l'image de l'\'evaluation de   $A$ sur $U(L)$ :
$$ ev_{A} :U(L) \to \Br L \subset \Q/\Z, \hskip2mm P \mapsto A(P)$$

Si $d$ est une puissance $p$-i\`eme dans $L$, l'image  de $ ev_{A}$ est clairement nulle.
C'est le cas pour $v$ place complexe et aussi pour  $v$ une place r\'eelle, puisque l'on a 
 $a>>0$ et $b>>0$, et donc $d>>0$. C'est aussi le cas si $v(p)>0$.

Soit $v$ une place non archim\'edienne de $k$  distincte de $v_{a}$ et de $v_{b}$, avec $v(p)=0$.
Si $d$ est une puissance $p$-i\`eme dans $L$, l'image de $ev_{A}$ est nulle. 
Supposons que
$d$ ne soit pas une puissance $p$-i\`eme dans $L$.  L'extension de corps locaux $KL/L$ est
non ramifi\'ee, les normes sont exactement les \'el\'ements de valuation divisible par $p$. 
Soit $(\Xi,x) \in U(L)$. De l'\'equation de $U$ on d\'eduit $w(Q(x))+w(R(x)) \equiv 0 \bmod p$.
Si $w(x)<0$, alors $w(Q(x))=w(x^p+c)=pw(x)  \equiv 0 \bmod p$.  Supposons $w(x)\geq 0$.
On a $(ax^p+ac+1) - a(x^p+c)=1$. On a donc soit $0= w(ax^p+ac+1)$ soit $0= w(a)+w(x^p+c)=w(x^p+c)$.
Donc $w(Q(x))=0$ ou $w(R(x))=0$. 
Comme on a  $w(Q(x))+w(R(x)) \equiv 0 \bmod p$,
on en d\'eduit  $w(Q(x))  \equiv 0 \bmod p$. Dans tous les cas, on a donc $w(Q(x))  \equiv 0 \bmod p$,
et ceci implique $(d,Q(x))_{\zeta}=0$. On voit donc
que pour toute place $v$ non archim\'edienne de $k$  distincte de $v_{a}$ et de $v_{b}$,
et toute extension finie $L/k_{v}$,
l'application $ ev_{A} : U(L) \to \Br L \subset \Q/\Z$ a son image nulle.

Soit $v=v_{a}$. Si $w(x)<0$ alors $Q(x)=x^p+c$ est une puissance $p$-i\`eme dans $L$,
et donc $(d,Q(x))_{\zeta}=0$. Supposons $w(x)\geq 0$.  Alors $ax^p+ac+1 \equiv 1 \bmod a$,
ce qui implique que $R(x)=ax^p+ac+1$ est une puissance $p$-i\`eme dans $L$, et donc
$(d,Q(x))_{\zeta}=(d,R(x))_{\zeta}=0.$

On voit donc que pour toute place $v \neq v_{b}$ et tout z\'ero-cycle $z_{v}$ sur $X_{k_{v}}$,
on a $A(z_{v})=0$.

Soit enfin $v=v_{b}$. 
L'\'equation de $U$ donne
$$(d,Q(x))_{\zeta} + (d, R(x)_{\zeta}) = 0 \in \Z/p \subset \Q/\Z = \Br L.$$
Supposons $w(x)\neq 0$. On a $w(ac+1)>0$.
Ainsi  $R(x)=ax^p+ac+1$ est le produit de $a$ et
d'une puissance $p$-i\`eme dans $L$. Donc
$(d,R(x))_{\zeta}= (ab, a)_{\zeta} $, et donc $(d,Q(x))_{\zeta}= -(ab,a)_{\zeta} \in \Z/p$.
 Supposons $w(x)> 0$. De $w(ac+1)>0$ on d\'eduit que $c$ est une unit\'e dans $L$
 et que $-ac$ est une puissance $p$-i\`eme dans $L$.
 Alors $Q(x)=x^p+c$ est le produit de $-a^{-1}$ et d'une puissance  $p$-i\`eme dans $L$, donc
 $(d,Q(x))_{\zeta}= (ab, -a^{-1})_{\zeta}=-(ab,-a)_{\zeta}$.
 Si $p$ est impair, $-1$ est une puissance $p$-i\`eme, donc $-(ab,a)_{\zeta}=-(ab,-a)_{\zeta}$.
 Si $p=2$, on a encore cette \'egalit\'e, car $(ab,-1)_{v_{b}}=0$ \cite[Lemma 5.2]{poonen2}.
 Ceci est \'etabli en utilisant $(a,-1)_{v_{b}}=0$ (facile) et $(b,-1)_{v_{b}}=0$ (obtenu via la
 formule du produit). 
 En conclusion, pour $v=v_{b}$, toute extension finie $L$ de $k_{v_{b}}$
 et tout point $P \in U(L)$, on a $A(P)= -(ab,-a)_{\zeta}= (b^{-1},-a)_{\zeta} =  (b^{-1},a)_{\zeta} \in \Br L$.
 
 Pour toute extension finie $E/F$ de corps, 
 le compos\'e $\Br F \to  \Br E  \to \Br F$  de la restriction et de la corestriction est
 la multiplication par le degr\'e $[E:F]$.
Pour $v=v_{b}$ et $z_{v_{b}}= \sum_{i}n_{i}P_{i}$  un z\'ero-cycle sur $X_{k_{v}}$,
la valeur prise par $A$  sur ce z\'ero-cycle est  donc
$$\sum n_{i} [k(P_{i}):k]  (b^{-1},a)_{\zeta} \in \Br k_{v},$$
soit encore ${\rm deg}(z_{v_{b}})(b^{-1},a)_{\zeta}\in \Br k_{v_{b}}$.
Si le  z\'ero-cycle $z_{v_{b}} $ est de degr\'e~1, la valeur prise est
$[(b^{-1},a)_{\zeta}]_{v_{b}} \in \Br k_{v_{b}} \subset \Z/p$, et cette classe est non nulle,
car l'unit\'e $a$ n'est pas une puissance $p$-i\`eme
dans le corps r\'esiduel de $k_{v_{b}}$.

 On voit donc que pour toute famille 
 $\{z_{v}\}, v \in \Omega,$ de z\'ero-cycles de degr\'e 1 sur $X$, on a
 $$\sum_{v \in \Omega} A(z_{v})  = [(b^{-1},a)_{\zeta}]_{v_{b}} \neq 0 \in \Q/\Z.$$
\end{proof}

 \section{Le th\'eor\`eme}

  Soient  $p$ un nombre premier et $k$ un corps de nombres  contenant une racine primitive $p$-i\`eme $\zeta$ de $1$,
  qu'on fixe, d\'eterminant ainsi un isomorphisme $\Z/p \oi \mu_{p}$ sur $k$.
Soient $a,b, c, d \in k$
 et $Q(x), R(x), P(x)=Q(x)R(x)$  comme dans  la proposition \ref{exemple}
 et sa d\'emonstration.

 Soient $A=k[u,1/u]$ et $B=k[u,1/u][v]/(v^p-du)$, avec le plongement 
 \'evident $A \hookrightarrow B$, qui fait de $\Spec B$ un $A$-sch\'ema fini
\'etale, plus pr\'ecis\'ement un $\Z/p$-torseur.

Soit ${\mathbb U}$ le  $R$-sch\'ema lisse  d\'efini par
$${\rm Norm}_{B/A}(\Xi)=P(x)  \neq 0,$$
o\`u $\Xi = \sum_{i=0}^{p-1} v^{i}x_{i}$.
L'espace total ${\mathbb U}$ est affine, car donn\'e par le syst\`eme 
  $${\rm Norm}_{B/A}(\Xi)=P(x) , \hskip1mm P(x)y-1=0.$$
 La fibre de  ${\mathbb U}/ R$ en un point $u \in \Spec R$ est not\'ee ${\mathbb U}_{u}$.
 Pour $u$ un $k$-point,  ${\mathbb U}_{u}= U(k((du)^{1/p})/k, P(x))$.

En utilisant le th\'eor\`eme d'Hironaka, 
on construit un $A$-sch\'ema int\`egre  $\mathbb X$, projectif et  lisse  sur $A$, 
contenant le $A$-sch\'ema ${\mathbb U}$ comme ouvert,
 tel que
pour tout  $k$-homomorphisme $A \to k$, i.e. pour tout  choix de $u \in k^{\times}$, le $A$-sch\'ema $\mathbb X$ 
se sp\'ecialise en une  $k$-vari\'et\'e projective, lisse, g\'eom\'etriquement int\`egre  ${\mathbb X}_{u}=X_{k((du)^{1/p})/k}$ contenant
 $U(k((du)^{1/p})/k, P(x))$ comme ouvert dense.

\begin{lem}\label{courbecommune}
Soit $C$ la $k$-courbe lisse d\'efinie par l'\'equation
$$z^p=Q(x)R(x) \neq 0.$$

Il existe un ensemble fini $S$ de places de $k$, contenant les places archim\'ediennes,
tel que, pour toute place $v \notin S$,

(i) On a $C(k_{v}) \neq \emptyset$.

(ii) Pour  tout $u \in k_{v}^{\times}$   de valuation $v$-adique $v(u)$ non nulle modulo $p$,
l'alg\`ebre $(u,Q(x))_{\zeta}$, quand \'evalu\'ee sur $C(k_{v})$, prend toutes les valeurs
dans $\Z/p \subset \Br k_{v}$.
\end{lem}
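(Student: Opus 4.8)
The plan is to choose $S$ large enough to guarantee good reduction, and then to reduce both assertions to a counting statement over the residue field together with Hensel lifting and the tame symbol formula.

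\textbf{Choice of $S$ and setup.} I would take $S$ to consist of the archimedean places, the places dividing $p$, the places $v$ at which $a$ or $c$ is not a unit or the discriminant of $P=Q R$ is not a unit (so that modulo $v$ the reductions $\overline Q,\overline R$ remain separable of degree $p$ and mutually coprime), and finally all $v$ with $\#\F_{v}\le M_{0}$, where $M_{0}$ is a constant depending only on $p$ to be fixed below; this $S$ is finite. For $v\notin S$ the affine $\F_{v}$-curve $\overline{C_{0}}\colon z^{p}=\overline P(x)$, $\overline P(x)\ne0$, is smooth (the derivative in $z$ is $p z^{p-1}$, a unit off $z=0$) and geometrically integral (it is a $\mu_{p}$-cover of $\A^{1}$ branched at the $2p$ distinct roots of $\overline P$, and $\overline P$ is not a $p$-th power), of genus bounded in terms of $p$ alone. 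Since $\zeta\in k$, for $v\notin S$ we have $\mu_{p}\subset\F_{v}$, hence $\#\F_{v}\equiv1\pmod p$ and a fixed identification $\F_{v}^{\times}/\F_{v}^{\times p}\simeq\Z/p$ depending on $\zeta$.

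\textbf{The counting statement (the crux).} I claim that for $\#\F_{v}$ larger than a constant depending only on $p$, and for every coset $\lambda\in\F_{v}^{\times}/\F_{v}^{\times p}$, there is $x_{0}\in\F_{v}$ with $Q(x_{0})\in\lambda\cdot\F_{v}^{\times p}$ and $R(x_{0})\in\lambda^{-1}\cdot\F_{v}^{\times p}$; in particular $Q(x_{0})R(x_{0})$ is then a nonzero $p$-th power in $\F_{v}$. Expressing the two congruence conditions through multiplicative characters of order dividing $p$, the number of such $x_{0}$ equals $\#\F_{v}/p^{2}$ up to an error which, by Weil's bound for the character sums attached to the polynomials $Q^{j}R^{\ell}$ with $0\le j,\ell<p$, $(j,\ell)\ne(0,0)$, is $O_{p}(\sqrt{\#\F_{v}})$; one uses here that $Q^{j}R^{\ell}$ is never a $p$-th power in $\overline{\F_{v}}[x]$, which holds because for $v\notin S$ the roots of $\overline Q$ are simple and disjoint from those of $\overline R$. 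Hence that number is positive as soon as $\#\F_{v}>M_{0}$, and we fix such an $M_{0}$; alternatively one applies Lang--Weil to the geometrically connected $\F_{v}$-curve $w^{p}=\lambda^{-1}Q(x),\ t^{p}=\lambda R(x)$. This uniform estimate is the main obstacle of the proof; everything else is routine.

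\textbf{Lifting and the symbol computation.} Applying the previous paragraph with $\lambda=1$ gives $x_{0}\in\F_{v}$ with $Q(x_{0})R(x_{0})$ a nonzero $p$-th power; I lift $x_{0}$ to an integral $x\in k_{v}$, whence $Q(x),R(x)$ are units and $Q(x)R(x)$ reduces to a nonzero $p$-th power, so since $v(p)=0$ Hensel's lemma yields $z\in\calo_{k_{v}}^{\times}$ with $z^{p}=Q(x)R(x)$, i.e. $(x,z)\in C(k_{v})$: this proves (i). For (ii), fix $v\notin S$, let $u\in k_{v}^{\times}$ with $e:=v(u)\not\equiv0\pmod p$, and let $\tau\in\Z/p$ be a prescribed target. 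Choose $\lambda\in\F_{v}^{\times}$ whose coset corresponds to $(-e)^{-1}\tau$ under $\F_{v}^{\times}/\F_{v}^{\times p}\simeq\Z/p$, pick $x_{0}$ as above for this $\lambda$, lift it to an integral $x\in k_{v}$, and construct $z$ by Hensel as before, so $(x,z)\in C(k_{v})$ with $v(Q(x))=0$. The tame symbol formula for the local field $k_{v}$ (residue characteristic $\ne p$, $\mu_{p}\subset k_{v}$) gives
$$\partial_{v}\big((u,Q(x))_{\zeta}\big)=\overline{Q(x)}^{\,-e}\in\F_{v}^{\times}/\F_{v}^{\times p},$$
so that, under $\F_{v}^{\times}/\F_{v}^{\times p}\simeq\Z/p\subset\Q/\Z=\Br k_{v}$, the class $(u,Q(x))_{\zeta}$ has invariant $-e\cdot[\overline{Q(x)}]=-e\cdot[\lambda]=\tau$ (here $-e$ is invertible mod $p$, which is why $x\mapsto\overline{Q(x)}$ surjective implies $x\mapsto(u,Q(x))_{\zeta}$ surjective). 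As $\tau$ was arbitrary, $(u,Q(x))_{\zeta}$ takes every value of $\Z/p\subset\Br k_{v}$ on $C(k_{v})$, which is (ii).

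\textbf{Remark on the argument.} The only non-formal input is the uniform counting statement of the second paragraph; it depends precisely on keeping $Q$ and $R$ separable and coprime modulo $v$, which is the reason those places are thrown into $S$. The remaining steps --- Hensel lifting of $x_{0}$ and of a $p$-th root of $Q(x)R(x)$, and the tame symbol evaluation of the cyclic class $(u,Q(x))_{\zeta}$ --- are standard, and part (i) is a formal consequence of the $\lambda=1$ case.
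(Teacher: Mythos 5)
Your proof is correct and follows essentially the same route as the paper: for each class $\lambda$ one produces a residue point where $Q$ lands in $\lambda\cdot\F_{v}^{\times p}$ and $R$ in $\lambda^{-1}\cdot\F_{v}^{\times p}$ by uniform point counting over $\F_{v}$ (your character-sum estimate is equivalent to the paper's Weil bound on the auxiliary curve $z_{1}^{p}=\gamma Q(x),\ z_{2}^{p}=\gamma^{-1}R(x)$, which you yourself mention as the alternative), then lifts by Hensel and evaluates the tame symbol. The only cosmetic difference is that the paper makes the uniformity explicit by computing the genus of the projective model of that auxiliary curve as a complete intersection in $\P^{3}$, whereas you invoke Weil's bound for the character sums attached to $Q^{j}R^{\ell}$.
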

\begin{proof}
Soit $v$ une place finie    telle que $Q(x)R(x)$ ait ses coefficients dans l'anneau $O_{v}$
des entiers de $k_{v}$, son coefficient dominant une unit\'e dans $O_{v}$, et
que $p$ soit inversible dans $O_{v} $. Soit $\F_{v}$ le corps r\'esiduel en $v$.
 La $\F_{v}$-courbe $C_{\gamma}$ d\'efinie  par
$$z_{1}^p=\gamma Q(x) \neq 0, \hskip1mm z_{2}^p=\gamma^{-1} R(x)\neq 0,$$
  est lisse et g\'eom\'etriquement int\`egre. Son mod\`ele projectif \'evident dans $\P^3$
  est une courbe intersection compl\`ete lisse $D_{\gamma}$ de deux surfaces de degr\'e $p$,
  le genre de $D_{\gamma}$ est donc $p^3-2p^2+1$.
  Sur une cl\^oture alg\'ebrique du corps de base, on v\'erifie imm\'ediatement que le compl\'ementaire  de $C_{\gamma}$ dans $D_{\gamma}$ est
 form\'e de $3p^2$ points.

Si le cardinal de $\F_{v}$ est plus grand qu'une constante d\'ependant seulement de  l'entier $p$, par les estimations de Weil pour les courbes,
pour tout $\gamma \in \F_{v}^{\times}$, la $\F_{v}$-courbe $C_{\gamma}$  
 contient un $\F_{v}$-point.  En associant au  point de coordonn\'ees $(x,z_{1},z_{2})$
 le point de coordonn\'ees $(x,z=z_{1}.z_{2})$ on d\'efinit 
  un morphisme de  $C_{\gamma}$ dans
 la courbe  lisse  d'\'equation $z^p=Q(x)R(x) \neq 0$ sur $\F_{v}$.
 Par le lemme de Hensel, l'image d'un $\F_{v}$-point de $C_{\gamma}$ dans
cette courbe lisse est un $\F_{v}$-point qui
se rel\`eve en un $O_{v}$-point de la courbe $C$.  Sur un tel $O_{v}$-point,
l'alg\`ebre $(u,Q(x))_{\zeta}$ a pour valeur $\gamma^{v(u)} \in \F_{v}^{\times}/\F_{v}^{\times p} \simeq \Z/p$.
Comme $\gamma \in \F_{v}^{\times}$ est  arbitraire, ceci \'etablit le lemme.
\end{proof}

L'\'enonc\'e suivant est la g\'en\'eralisation du th\'eor\`eme 1.3 de \cite{poonen} (cas $p=2$).
C'est le Th\'eor\`eme 5.2 de \cite{vav}.

\begin{prop}\label{finitude}
L'ensemble   des $u \in k^{\times} $ tels que que l'on ait \`a la fois  ${\mathbb X}_{u}(\A_{k}) \neq \emptyset$ 
et    ${\mathbb X}_{u}(\A_{k})^{\Br} =\emptyset$
 forme un nombre fini de classes dans $k^{\times}/k^{\times p}$.
\end{prop}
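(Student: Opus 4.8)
The plan is to show that the finitely many classes in $k^\times/k^{\times p}$ correspond exactly to "bad reduction" of the family $\mathbb{X}/A$, by a local-global patching argument analogous to Poonen's. First I would fix the finite set $S$ of places of $k$ produced by Lemma~\ref{courbecommune}, enlarged so that it contains $v_a$, $v_b$, all archimedean places, all places where $p$ is not invertible, all places where the coefficients of $P$ are not $v$-integral or the leading coefficient is not a unit, and all places of bad reduction of a fixed projective model $\mathbb{X}/A$. I claim that any $u \in k^\times$ whose class in $k^\times/k^{\times p}$ is represented by an $S$-unit with $v(u) \equiv 0 \bmod p$ for all $v \in S$ (equivalently, $du$ is, up to $p$-th powers, supported inside $S$ together with the support of $d$) either has $\mathbb{X}_u(\A_k) = \emptyset$ or $\mathbb{X}_u(\A_k)^{\Br} \neq \emptyset$; since the set of such classes is cofinite in the finite-support subgroup... wait — more precisely, the complementary set of classes, those $u$ for which $v(u) \not\equiv 0 \bmod p$ for at least one $v \in S$, is still only finitely many classes modulo $k^{\times p}$ once one fixes a representative with support in $S$ — no: the point is the opposite. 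Let me restate: I would show that \emph{outside} a finite set of classes, every $u$ has either no adelic point or no Brauer-Manin obstruction. The finitely many exceptional classes are those where $du$, modulo $p$-th powers, is an $S$-unit at every place of $\Omega \setminus S$; equivalently the support of $du$ modulo $p$-th powers is contained in $S \cup \mathrm{Supp}(d)$, which is a finite set, and $S$-units modulo $p$-th powers form a finite group.

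Second, for a class \emph{not} among these exceptions, there is a place $v_0 \notin S$ with $v_0(du) \not\equiv 0 \bmod p$. I would then run the argument of Proposition~\ref{exemple}: at such a place $v_0$, by Lemma~\ref{courbecommune}(i) the curve $C : z^p = Q(x)R(x)$ has a $k_{v_0}$-point, which (via $z \mapsto (\Xi, x)$ with an appropriate $\Xi$, exactly as in the construction of $\mathbb{U}$) gives a point of $\mathbb{U}_u(k_{v_0}) \subset \mathbb{X}_u(k_{v_0})$; and by Lemma~\ref{courbecommune}(ii) the generator $A_u = ((du)^{1/p}, Q(x))_\zeta$ of $\Br \mathbb{X}_u / \Br k$ takes \emph{all} values in $\Z/p$ on $\mathbb{X}_u(k_{v_0})$. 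At all other places $v$, either $\mathbb{X}_u(k_v) = \emptyset$ (in which case $\mathbb{X}_u(\A_k) = \emptyset$ and there is nothing to prove) or it is nonempty; assuming the former never happens, the freedom at $v_0$ to adjust $A_u$ to any prescribed value lets us correct whatever partial sum $\sum_{v \neq v_0} A_u(M_v) \in \Q/\Z$ arises, producing an adelic point orthogonal to $A_u$, hence to all of $\Br \mathbb{X}_u$ (since $A_u$ generates modulo $\Br k$ and $\Br k$ contributes $0$ by the reciprocity law). Thus $\mathbb{X}_u(\A_k)^{\Br} \neq \emptyset$.

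Third, I would dispatch the bookkeeping that only finitely many classes remain. The classes to control are those $u \in k^\times$ with $\mathbb{X}_u(\A_k) \neq \emptyset$, $\mathbb{X}_u(\A_k)^{\Br} = \emptyset$, and (by the above) with $v(du) \equiv 0 \bmod p$ for every $v \notin S$. Modulo $k^{\times p}$, such $u$ are represented by elements of the group of $S$-units $O_{k,S}^\times / (O_{k,S}^\times)^p$ (after absorbing $d$ and adjusting $S$), which is finite by Dirichlet's unit theorem. This gives the finiteness.

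\textbf{Main obstacle.} The delicate point is \emph{uniformity in $u$}: Lemma~\ref{courbecommune} already packages the hard geometric input (Weil estimates for the genus-$(p^3-2p^2+1)$ complete intersection curves $D_\gamma$, Hensel lifting, and the surjectivity of $A_u$-evaluation) in a way that is independent of $u$ for $v \notin S$, so that the "good place" $v_0$ can be chosen once $S$ is fixed, without $S$ growing with $u$. The second subtlety is the reciprocity/patching step: one must check that $\Br \mathbb{X}_u / \Br k$ is generated by the single class $A_u$ with the same vertical structure as in Proposition~\ref{modelecalculbrauer} (uniformly, which follows since $Q, R, P$ are fixed and only the cyclic extension $k((du)^{1/p})/k$ varies), and that $\sum_v \mathrm{inv}_v A_u(M_v) = 0$ is the only constraint, which is the standard Brauer-Manin / class-field-theory reciprocity. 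With those in hand the argument is a direct transcription of \cite[Thm.~1.3]{poonen} and \cite[Thm.~5.2]{vav}.
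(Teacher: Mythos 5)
Your proposal is correct and follows essentially the same route as the paper: choose the finite set $S$ from Lemma \ref{courbecommune} (enlarged to contain $v_a$, $v_b$), observe that the curve $C$ sits inside every $\mathbb{X}_u$, use the surjectivity of the evaluation of the generator $(du,Q(x))_\zeta$ of $\Br\mathbb{X}_u/\Br k$ on $C(k_{v_0})$ at any place $v_0\notin S$ with $v_0(du)\not\equiv 0\bmod p$ to kill the Brauer--Manin obstruction, and conclude that the remaining classes lie in the kernel of $k^\times/k^{\times p}\to\oplus_{v\notin S}\Z/p$, finite by the finiteness of the class number and Dirichlet's unit theorem. The only stylistic difference is the mid-stream restatement of which classes are exceptional; the final formulation you settle on matches the paper's.
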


\begin{proof} Il est clair que chacune des deux propri\'et\'es consid\'er\'ees ne d\'epend que de la classe
de $u$ dans $k^{\times}/k^{\times p}$.  Soient $C$
 et $S$ comme dans le lemme \ref{courbecommune}.
On suppose de plus que $S$ contient les places finies  avec $v(d) \neq 0$, c'est-\`a-dire
$v_a$ et $v_b$.
Pour tout $u \in k^{\times}$, la courbe $C$
est contenue dans ${\mathbb X}_{u}$. Soit $u$ tel que 
 ${\mathbb X}_{u}(\A_{k}) \neq \emptyset$. S'il existe une place $v \notin S$ telle
 que $v(u)=v(du) \in \Z$ ne soit pas  divisible par $p$, alors, d'apr\`es le lemme \ref{courbecommune},  l'alg\`ebre $(du,Q(x))_{\zeta}$
parcourt sur $C(k_{v}) \subset {\mathbb X}_{u}(k_{v})$ toutes les valeurs dans $\Z/p \subset \Br k_{v}$. D'apr\`es la proposition \ref{modelecalculbrauer}, la classe de  $(du,Q(x))_{\zeta}$ appartient \`a $\Br {\mathbb X}_{u}$
et engendre $\Br {\mathbb X}_{u}/ \Br k$.
 Il ne saurait donc y avoir obstruction de Brauer-Manin \`a l'existence d'un  $k$-point, et encore moins d'un z\'ero-cycle de degr\'e~1
 sur ${\mathbb X}_{u}$. On voit donc que les $u \in k^{\times}$ tels que  ${\mathbb X}_{u}(\A_{k}) \neq \emptyset$
 et qu'on ait obstruction de Brauer-Manin \`a l'existence d'un z\'ero-cycle de degr\'e 1
ont leur image dans le noyau de l'application
$$ {\rm div } : k^{\times}/k^{\times p} \to \oplus_{v \notin S} \Z/p$$
et ce noyau est fini (finitude du nombre de classes et th\'eor\`eme des unit\'es de Dirichlet).
 \end{proof}

\bigskip

Nous pouvons maintenant donner  la d\'emonstration du th\'eor\`eme  principal.
\begin{theo}\label{thprincipal}
Soit $k$ un corps de nombres. Soit $r>0$ un entier. Le compl\'ement de $k^{\times r}$
dans $k^{\times}$ est un ensemble diophantien : il existe une $k$-vari\'et\'e  $Z$
et un $k$-morphisme $f: Z \to \A^1_{k}$ tel que $f(Z(k))$ soit le compl\'ementaire des puissances
$r$-i\`emes dans $k$.
\end{theo}

\begin{proof}
Une r\'eduction connue \cite[Cor. 1.2]{poonen}, \cite[Lemma 5.3 \& Proof of Cor. 1.2, p. 133]{vav} 
montre qu'il suffit d'\'etablir le r\'esultat lorsque $r$ est un nombre premier $p$, et que de plus
 $k$  contient les racines primitives $p$-i\`emes de l'unit\'e. On fixe une telle racine primitive $p$-i\`eme $\zeta$ dans $k$.

Notons $H_{v} = k^{\times} \cap k_{v}^{\times p}$ et $N_{v}$ le compl\'ementaire de $H_{v}$ dans $k^{\times}$. 
Ce sont des  ensembles diophantiens  (cf.  \cite[Thm 1.5]{poonen}) stables par multiplication par 
 $k^{\times p}$.
 
Soit $D_{1}$  l'ensemble des $u  \in k^{\times}$ tels que  ${\rm Sym}^{2p+1}{\mathbb U}_{u}(k) \neq \emptyset$.
C'est un ensemble stable par multiplication par  $k^{\times p}$. C'est un ensemble diophantien, car
c'est l'image des  points $k$-rationnels de la $k$-vari\'et\'e  ${\rm Sym}^{2p+1}_{A}{\mathbb U}$ 
par la projection naturelle ${\rm Sym}^{2p+1}_{A}{\mathbb U} \to \Spec A= \G_{m,k} \subset \A^1_{k}$.
On a not\'e ici  ${\rm Sym}^{2p+1}_{A}{\mathbb U}$ le
quotient du produit fibr\'e de $N$ exemplaires de ${\mathbb U}$ au-dessus de $\Spec A$ par l'action de
$\frak{S}_{N}$.

Soit $D_{2}$ la r\'eunion de $D_{1}$ et des $N_{v}$ pour $v \in S$, avec $S$ comme au lemme
\ref{courbecommune}.
C'est un ensemble diophantien
dans $k^{\times} \subset k$, stable par
multiplication par $k^{\times p}$.  

Consid\'erons le compl\'ement  $E$ de $D_{2}$ dans $k^{\times}$. 
C'est un ensemble stable par multiplication par  $k^{\times p}$. Pour tout $u \in E$, on a 
${\mathbb X}_{u}(\A_{k}) \neq \emptyset$.  
D'apr\`es le corollaire  \ref{caspartCTSD},
 l'ensemble $E$ est contenu dans l'ensemble $C$ des $u \in k^{\times}$ tels que de plus
${\mathbb X}_{u}(\A_{k})^{\Br} = \emptyset$.
D'apr\`es la proposition \ref{finitude}, l'ensemble $C$ est union d'un nombre {\it fini} d'orbites de   $k^{\times p}$ dans $k^{\times}$.
Il en est donc de m\^eme de $E$. D'apr\`es la proposition \ref{exemple}, l'orbite de $1$, soit $k^{\times p}$, n'est pas dans $D_{2}$, 
et elle est dans $E$.

 Le compl\'ement $D_{3}$ de
 $k^{\times p}$ dans $E$  est une union finie d'orbites  de $k^{\times p}$ dans $k^{\times}$, chacune clairement un ensemble diophantien,
 donc $D_{3}$ est un ensemble diophantien. La r\'eunion de $D_{2}$ et de $D_{3}$ est un ensemble diophantien $D_{4}$ dont le compl\'ement
 dans $k^{\times}$ est pr\'ecis\'ement $k^{\times p}$. Ainsi le compl\'ement de $k^{\times p}$ dans $k^{\times}$ 
  est l'ensemble diophantien $D_{4}$,
 ce qui \'etablit le th\'eor\`eme \ref{thprincipal}.
 \end{proof}
 
 \begin{rema}
 Dans la d\'emonstration, on peut remplacer l'ensemble diophantien  $D_{1}$ par  l'ensemble  diophantien $D'_{1}$ form\'e des 
  $u  \in k^{\times}$
  tels que  ${\rm Sym}^{2p+1}{\mathbb X}_{u}(k) \neq \emptyset$. L'ensemble $D'_{1}$ est stable par multiplication par $k^{\times p}$. Il contient $D_{1}$. On d\'efinit   $D'_{2}$ comme
   la r\'eunion de $D'_{1}$ et des $N_{v}$ pour $v \in S$, puis $E' \subset E \subset C$
   comme le compl\'ement de $D'_{2}$ dans $k^{\times}$.
 \end{rema}
 
 \section{Une variante de la  d\'emonstration}
 
 Dans la d\'emonstration du th\'eor\`eme \ref{thprincipal}, un ingr\'edient cl\'e est le corollaire \ref{caspartCTSD}
 du th\'eor\`eme \ref{caspartCTSkSD}.

 En combinant un r\'esultat non publi\'e de Salberger (1985), d\'ecrit dans l'appendice (paragraphe \ref{appendice} ci-apr\`es),
  on peut utiliser directement
  les th\'eor\`emes principaux de  \cite{CTSD} ou \cite{CTSkSD} sur les z\'ero-cycles
pour donner une variante de la d\'emonstration du th\'eor\`eme \ref{thprincipal}, variante
qui \'evite le corollaire \ref{caspartCTSD}
 du th\'eor\`eme \ref{caspartCTSkSD}, et donc aussi ce dernier th\'eor\`eme.

Le corollaire \ref{caspartCTSD} du  
th\'eor\`eme \ref{caspartCTSkSD} \'etablit le r\'esultat suivant :

\medskip

{\it Soit $u \in k^{\times}$. Si l'on a 
  ${\mathbb X}_{u}(\A_{k})^{\Br}  \neq \emptyset$, alors ${\rm Sym}^{2p+1}{\mathbb X}_{u} (k) \neq \emptyset$.}
  
  \medskip

Voici comment \'etablir directement un substitut de ce r\'esultat.
D'apr\`es \cite[Thm. 5.1]{CTSD} ou \cite[Thm. 4.1]{CTSkSD}, l'hypoth\`ese  ${\mathbb X}_{u}(\A_{k})^{\Br}  \neq \emptyset$
implique l'existence d'un z\'ero-cycle de degr\'e 1 sur  la $k$-vari\'et\'e ${\mathbb X}_{u}$. 
Le th\'eor\`eme \ref{corsalberger} ci-dessous, avec $s=2p$, donne alors 
l'existence d'un z\'ero-cycle effectif de degr\'e  $(p-1)^2$ sur ${\mathbb X}_{u}$.
Ainsi  ${\mathbb X}_{u}(\A_{k})^{\Br}  \neq \emptyset$ implique que $u \in k^{\times}$ est dans l'image des
points $k$-rationnels de ${\rm Sym}^N_{A}{\mathbb X}$ via la projection
${\rm Sym}_{A}^{p^2-2p+1}{\mathbb X} \to \G_{m}$. On proc\`ede alors exactement comme dans la d\'emonstration du
th\'eor\`eme \ref{thprincipal}, en y rempla\c cant la projection ${\rm Sym}_{A}^{p+1}{\mathbb U} \to \G_{m}$
par ${\rm Sym}_{A}^{p^2-2p+1}{\mathbb X} \to \G_{m}$.

\section{Appendice : R\'esultats d'effectivit\'e pour les z\'ero-cycles sur les familles de vari\'et\'es de Severi-Brauer}\label{appendice}

 Soit  $p$ un nombre premier, $k$ un corps contenant une racine primitive $p$-i\`eme de $1$
 et  $P(x) \in k[x]$ un polyn\^ome s\'eparable de degr\'e $2p$. Soit $K/k$ une extension cyclique de corps, de groupe $G=<\sigma> = \Z/p$.

 Soit $R_{1}$ la $k[x]$-alg\`ebre 
 $$R_{1} = \oplus_{i=0}^{p-1}  K[x] \xi^{i},$$
 avec $x$ central et  les relations $\xi^p=P(x)$ et pour $\lambda \in K$, $\xi.\lambda=\sigma(\lambda).\xi$.
Alors $R_{1}$ est    un ordre maximal, et donc en particulier h\'er\'editaire,
dans l'alg\`ebre simple centrale  $(K/k,P(x))$ sur 
sur le corps $k(x)$.

 Notons $P'(y)=y^{2p}P(1/y)$. Soit $R_{2}$ la $k[y]$-alg\`ebre 
 $$R_{1} = \oplus_{i=0}^{p-1}  K[y] \eta^{i},$$
 avec $ y$ central et  les relations $\eta^p=P'(y)$ et, pour $\lambda \in K$, $\eta.\lambda=\sigma(\lambda).\eta$.
Alors $R_{2}$ est un ordre h\'er\'editaire  sur $k[y]$.

Les identifications $y=1/x$ et $y^2\xi=\eta$ permettent, sur
$k[x,x^{-1}]$ et $k[y,y^{-1}]$, de recoller les deux ordres h\'er\'editaires   en un faisceau $\tilde{R}$ de tels ordres
  sur la droite projective $\P^1_{k}$.
  
  D'apr\`es M. Artin \cite{A},
   le foncteur $F$ sur la cat\'egorie des  $\P^1_{k}$-sch\'emas 
    dont les points $F(S)$ sur un  $\P^1_{k}$-sch\'ema affine $S$
  sont les id\'eaux \`a gauche  du faisceau d'alg\`ebres  $\tilde{R}_{S}$ qui sont  localement libres de rang $p$  
  comme faisceaux de $O_{S} $-modules,
  et qui sont  localement
  facteurs directs  comme   $O_{S} $-modules de $\tilde{R}_{S}$,
  est repr\'esentable par un $\P^1_{k}$-sch\'ema projectif $X'$.
   
   Au-dessus de l'ouvert de $\P^1_{k}$ o\`u l'alg\`ebre $\tilde{R}$ est d'Azumaya, ce sch\'ema $X'$  se
   restreint en un sch\'ema de Severi-Brauer relatif.
  Toujours d'apr\`es M. Artin \cite[Thm. (1.4)]{A}, la fibre g\'en\'erique de ce $\P^1_{k}$-sch\'ema $X'$  est
  la vari\'et\'e de Severi-Brauer sur le corps $k(\P^1)=k(x)$ associ\'ee \`a la $k(x)$-alg\`ebre simple centrale $(K/k,P(x))$.
 
 Toujours d'apr\`es M. Artin, la composante connexe du sch\'ema $X'$  contenant la fibre g\'en\'erique 
  est un $k$-sch\'ema r\'egulier $X=X(K/k,P)$, donc lisse  sur $k$ si  ${\rm car}(k)=0$.
 Les fibres de $\pi : X \to \P^1_{k}$ sont g\'eom\'etriquement connexes.

  M. Artin (\cite{A}, voir aussi \cite{F}), donne une description pr\'ecise des fibres non lisses
  du morphisme projectif $X \to \P^1_{k}$, description dont
  nous n'aurons pas besoin ici.

\medskip

Le th\'eor\`eme suivant est annonc\'e et partiellement d\'emontr\'e par P. Salberger dans sa th\`ese
 \cite{salbergerthese}.
 \begin{theo}  \cite[Cor. 5.9]{salbergerthese}\label{salbergerenonce}
Soit $k$ un corps de caract\'eristique nulle.  Soit $K=k(\P^1)$. 
Soit $A$ une alg\`ebre \`a division sur $K$ d'indice premier $p$.
Soit $\tilde R$  un faisceau d'ordres maximaux de $A$ sur $\P^1_{k}$.
Soit $\pi : X \to \P^1_{k}$ le sch\'ema comme ci-dessus.
Soit $s$ la somme des degr\'es sur $k$ des points ferm\'es de $\P^1_{k}$
 au voisinage desquels la $O_{\P^1_{k}}$-alg\`ebre  $\tilde R$ n'est pas une alg\`ebre d'Azumaya.

 Tout z\'ero-cycle sur $X$ de degr\'e au moins \'egal \`a $$N_{0}=(1/2) (p-1)(s- 2(p+1)/p)$$
est rationnellement \'equivalent sur $X$ \`a un z\'ero-cycle effectif.
\end{theo}

Ce th\'eor\`eme  g\'en\'eralise un r\'esultat sur les surfaces fibr\'ees en coniques 
au-dessus de la droite projective
 \cite{CTCo}. Salberger donne un \'enonc\'e plus g\'en\'eral, au-dessus d'une courbe
 de genre quelconque.
Sa d\'emonstration utilise
une version de l'in\'egalit\'e de Riemann pour les id\'eaux des ordres maximaux
sur une courbe due \`a Witt \cite{Witt}, et reprise en particulier
dans \cite{vBvG}.

\begin{cor}\label{corsalberger}
Avec les notations et hypoth\`eses ci-dessus, 
soit
   $N=N(p,s)\geq 1$ le plus petit entier congru \`a 1 modulo $p$ et au moins \'egal \`a $ (1/2) (p-1)(s- 2(p+1)/p)$.
 Les conditions suivantes sont \'equivalentes :

(a) $X$ poss\`ede un z\'ero-cycle de degr\'e 1.

(b) $X$ poss\`ede un z\'ero-cycle effectif de degr\'e $N$.

(c) Le produit sym\'etrique ${\rm Sym}^NX$ poss\`ede un $k$-point.
\end{cor}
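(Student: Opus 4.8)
Le plan est d'\'etablir la cha\^{\i}ne d'implications $(b)\Leftrightarrow(c)$, $(a)\Rightarrow(b)$ et $(b)\Rightarrow(a)$, dont d\'ecoule l'\'equivalence des trois assertions. L'\'equivalence $(b)\Leftrightarrow(c)$ n'est pas propre \`a la situation consid\'er\'ee : c'est exactement la bijection rappel\'ee avant le corollaire \ref{caspartCTSD} entre les $k$-points de ${\rm Sym}^NX$ et les z\'ero-cycles effectifs de degr\'e $N$ sur $X$, appliqu\'ee ici \`a la $k$-vari\'et\'e $X$, qui est de surcro\^{\i}t projective.

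Pour $(a)\Rightarrow(b)$ : soit $z_{0}$ un z\'ero-cycle de degr\'e $1$ sur $X$. Le z\'ero-cycle $N\cdot z_{0}$ est de degr\'e $N$, et $N\geq N_{0}$ par construction de $N$, o\`u $N_{0}=(1/2)(p-1)(s-2(p+1)/p)$ est la borne du th\'eor\`eme \ref{salbergerenonce}. Ce th\'eor\`eme fournit alors un z\'ero-cycle effectif $z'$ rationnellement \'equivalent \`a $N\cdot z_{0}$ sur $X$. Comme $X$ est propre sur $k$, l'\'equivalence rationnelle pr\'eserve le degr\'e, donc $\deg z'=N$, et $z'$ est le z\'ero-cycle effectif de degr\'e $N$ cherch\'e.

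Pour $(b)\Rightarrow(a)$ : on commence par exhiber un z\'ero-cycle de degr\'e $p$ sur $X$. Le lieu de $\P^1_{k}$ o\`u $\tilde R$ n'est pas d'Azumaya \'etant form\'e d'un nombre fini de points ferm\'es et $k$ \'etant infini (de caract\'eristique nulle), il existe un point $k$-rationnel $M\in\P^1_{k}$ en lequel $\tilde R$ est une alg\`ebre d'Azumaya. La fibre $X_{M}$ est alors la vari\'et\'e de Severi-Brauer attach\'ee \`a l'alg\`ebre simple centrale $A_{M}=\tilde R_{M}$ sur $k$, laquelle est de degr\'e $p$ (car $\tilde R$ est d'Azumaya de degr\'e $p$ au voisinage de $M$, sa fibre g\'en\'erique \'etant l'alg\`ebre \`a division $A$, d'indice $p$), donc d'indice $1$ ou $p$. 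Si $\ind(A_{M})=1$, alors $X_{M}$ a un $k$-point et $X$ un z\'ero-cycle de degr\'e $1$ ; sinon $A_{M}$ est une alg\`ebre \`a division de degr\'e $p$, tout sous-corps maximal de $A_{M}$ la d\'eploie, et $X_{M}$ poss\`ede un point ferm\'e de degr\'e divisant $p$ et distinct de $1$, donc de degr\'e exactement $p$. Dans tous les cas $X$ porte un z\'ero-cycle $w$ de degr\'e $p$. Soit enfin $z$ un z\'ero-cycle effectif de degr\'e $N$ comme dans $(b)$ ; de $N\equiv 1\bmod p$ et $p$ premier on tire $\gcd(N,p)=1$, de sorte qu'une combinaison $\Z$-lin\'eaire de $z$ et de $w$ est un z\'ero-cycle de degr\'e $1$ sur $X$, ce qui est $(a)$.

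Le seul point un peu substantiel, en dehors du th\'eor\`eme \ref{salbergerenonce}, est cette construction d'un z\'ero-cycle de degr\'e $p$, qui ne repose que sur le fait que $p$ est premier (l'indice de l'alg\`ebre sur une bonne fibre vaut donc $1$ ou $p$) et sur l'existence d'un point $k$-rationnel de $\P^1_{k}$ hors du lieu de non-Azumaya ; tout le reste est formel et aucune difficult\'e nouvelle n'appara\^{\i}t.
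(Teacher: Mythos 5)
Votre d\'emonstration est correcte et suit essentiellement la m\^eme d\'emarche que celle de l'article (qui la r\'edige pour la variante birationnelle en fin d'appendice) : pour $(a)\Rightarrow(b)$ on multiplie un z\'ero-cycle de degr\'e $1$ par $N$ et on applique le th\'eor\`eme \ref{salbergerenonce}, pour $(b)\Rightarrow(a)$ on combine le cycle effectif de degr\'e $N\equiv 1\bmod p$ avec le point ferm\'e de degr\'e $1$ ou $p$ fourni par une fibre au-dessus d'un $k$-point de $\P^1_{k}$ hors du lieu de non-Azumaya, et $(b)\Leftrightarrow(c)$ est l'identification standard des $k$-points de ${\rm Sym}^{N}X$ avec les z\'ero-cycles effectifs de degr\'e $N$. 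Rien \`a redire.
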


Donnons un \'enonc\'e analogue pour toute  $k$vari\'et\'e projective et lisse
$k$-birationnelle \`a $X$.

Le  lemme suivant est une extension aux z\'ero-cycles du lemme de Lang--Nishimura \cite[Lemme 3.1.1]{CTCoSa}.

\begin{lem}\label{Nishicycle}
Soit $k$ un corps de caract\'eristique z\'ero.
Soit $X$ une $k$-vari\'et\'e lisse int\`egre et $Y$ une $k$-vari\'et\'e propre.
S'il existe une application $k$-rationnelle $f$  de $X$ vers $Y$, et si $X$
poss\`ede un z\'ero-cycle effectif de degr\'e  $N$, alors $Y$ poss\`ede un z\'ero-cycle effectif de degr\'e $N$.
\end{lem}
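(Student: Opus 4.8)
Le plan est de se ramener au lemme classique de Lang--Nishimura \cite[Lemme 3.1.1]{CTCoSa}, appliqu\'e s\'epar\'ement sur les corps r\'esiduels des points ferm\'es intervenant dans le z\'ero-cycle, puis de recoller les points de $Y$ ainsi obtenus en un z\'ero-cycle effectif sur $Y$ de m\^eme degr\'e.

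On commence par \'ecrire le z\'ero-cycle effectif donn\'e sur $X$ sous la forme $\sum_{i} n_{i} P_{i}$, les $P_{i}$ \'etant des points ferm\'es distincts, $n_{i} > 0$, et, en posant $L_{i} = k(P_{i})$ et $d_{i} = [L_{i}:k]$, on a $\sum_{i} n_{i} d_{i} = N$. Fixons $i$. On voit $P_{i}$ comme un $k$-morphisme $\Spec L_{i} \to X$, c'est-\`a-dire comme un $L_{i}$-point de $X_{L_{i}} = X \times_{k} L_{i}$ ; comme $X$ est lisse sur $k$, le $L_{i}$-sch\'ema $X_{L_{i}}$ est lisse sur $L_{i}$, et ce $L_{i}$-point est port\'e par une composante connexe $X_{i}'$ de $X_{L_{i}}$, qui est lisse et int\`egre sur $L_{i}$ et qui domine $X$ (le morphisme $X_{L_{i}} \to X$ \'etant fini et plat). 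L'application $k$-rationnelle $f$, repr\'esent\'ee par un morphisme d\'efini sur un ouvert dense $U \subset X$, induit alors une application $L_{i}$-rationnelle de $X_{i}'$ vers $Y_{L_{i}}$, et $Y_{L_{i}}$ est propre sur $L_{i}$.

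On applique alors le lemme de Lang--Nishimura \cite[Lemme 3.1.1]{CTCoSa} sur le corps $L_{i}$, ce qui donne $Y_{L_{i}}(L_{i}) \neq \emptyset$. En composant un $L_{i}$-point de $Y_{L_{i}}$ avec la projection $Y_{L_{i}} \to Y$, on obtient un point ferm\'e $Q_{i} \in Y$ dont le corps r\'esiduel se plonge dans $L_{i}$ ; en posant $e_{i} = [k(Q_{i}):k]$, qui divise $d_{i}$, le z\'ero-cycle effectif $\sum_{i} n_{i} (d_{i}/e_{i})\,[Q_{i}]$ sur $Y$ (o\`u l'on regroupe les coefficients si certains $Q_{i}$ co\"incident) est de degr\'e $\sum_{i} n_{i} (d_{i}/e_{i}) e_{i} = \sum_{i} n_{i} d_{i} = N$, ce qui ach\`eve la d\'emonstration.

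Il n'y a pas de difficult\'e essentielle une fois que l'on dispose de la version du lemme de Lang--Nishimura portant sur les points rationnels ; les seuls points qui demandent un peu d'attention sont de nature comptable : apr\`es extension des scalaires $X_{L_{i}}$ peut \^etre non connexe, de sorte qu'il faut passer \`a la composante portant le $L_{i}$-point choisi avant d'invoquer le lemme, et le corps r\'esiduel $k(Q_{i})$ peut \^etre un sous-corps strict de $L_{i}$, ce qui explique que $Q_{i}$ doive \^etre affect\'e de la multiplicit\'e $n_{i} d_{i}/e_{i}$ (et non $n_{i}$) pour aboutir \`a un z\'ero-cycle de degr\'e exactement $N$.
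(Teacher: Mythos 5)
Votre d\'emonstration est correcte, mais elle suit une route diff\'erente de celle du texte. L'article \'eclate directement le support du z\'ero-cycle : l'application rationnelle vers la vari\'et\'e propre $Y$ reste ind\'etermin\'ee seulement en codimension $\geq 2$ sur l'\'eclat\'e, donc chaque diviseur exceptionnel $\P^{d-1}_{k(P_i)}$ contient un $k(P_i)$-point dans l'ouvert de d\'efinition, et l'image directe du z\'ero-cycle ainsi obtenu fournit d'un coup le z\'ero-cycle effectif de degr\'e $N$ sur $Y$ ; c'est en somme une reprise de la preuve de Lang--Nishimura adapt\'ee aux cycles. Vous proc\'edez au contraire par r\'eduction au lemme de Lang--Nishimura classique (version points rationnels) appliqu\'e s\'epar\'ement sur chaque corps r\'esiduel $L_i=k(P_i)$, ce qui vous oblige \`a deux v\'erifications de comptabilit\'e que vous traitez correctement : le passage \`a la composante connexe de $X_{L_i}$ portant le $L_i$-point (composante qui est bien lisse, int\`egre, et rencontre l'ouvert de d\'efinition puisque $X_{L_i}\to X$ est fini et plat), et le fait que le point ferm\'e $Q_i$ image dans $Y$ peut avoir un corps r\'esiduel strictement plus petit que $L_i$, d'o\`u la multiplicit\'e $n_i d_i/e_i$ qui r\'etablit le degr\'e $N$. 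Votre approche a l'avantage d'utiliser Lang--Nishimura comme bo\^{\i}te noire et d'\^etre ind\'ependante du fait (implicite dans la preuve du texte) que le lieu d'ind\'etermination est de codimension $\geq 2$ ; celle du texte est plus directe et produit le cycle effectif comme une image directe $f'_*$, sans base change ni discussion de composantes. Les deux arguments utilisent de mani\`ere essentielle la caract\'eristique nulle (ou la perfection de $k$) et la lissit\'e de $X$, et sont tous deux valables.
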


\begin{proof}
Soit $z$ un tel z\'ero-cycle. On \'eclate le support du z\'ero-cycle.
Ceci donne une $k$-vari\'et\'e lisse $X'$ et une application $k$-rationnelle $f'$ de $X'$ vers $Y$,
qui est d\'efinie sur le compl\'ementaire d'un ferm\'e de codimension~2 de $X'$.
La fibre  au-dessus d'un point ferm\'e $P$ dans le support de $z$ est un espace projectif $\P^{d-1}_{k(P)}$.
Il existe donc un z\'ero-cycle effectif $z'$ de degr\'e $N$ dans l'ouvert de d\'efinition de $f'$.
L'image de ce z\'ero-cycle par l'application $f'_{*}$ est un z\'ero-cycle de degr\'e $N$ dans $Y$.  
\end{proof}

 \begin{lem}\label{bireffectifN}
  Soit $k$ un corps de caract\'eristique z\'ero. Soient $X$ et $Y$ deux $k$-vari\'et\'es propres et lisses, g\'eom\'etri\-quement connexes,
  $k$-birationnelles entre elles.
Si pour un entier $ N>0$ tout z\'ero-cycle sur $Y$ de degr\'e   $N$ est rationnellement \'equivalent sur 
$Y$ \`a un z\'ero-cycle effectif, et si  $X$ poss\`ede un z\'ero-cycle  de degr\'e $N$, alors $X$
poss\`ede un z\'ero-cycle effectif de degr\'e $N$.
\end{lem}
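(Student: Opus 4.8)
Le plan est de ramener l'\'enonc\'e \`a $Y$, o\`u l'hypoth\`ese procure l'effectivit\'e, puis de revenir \`a $X$ gr\^ace au lemme \ref{Nishicycle} appliqu\'e \`a l'application $k$-rationnelle r\'eciproque. Concr\`etement, on part d'un z\'ero-cycle $z$ de degr\'e $N$ sur $X$ et on le transporte d'abord sur $Y$. Soit $Z$ l'adh\'erence du graphe de l'application $k$-birationnelle entre $X$ et $Y$ dans $X\times_{k}Y$, munie des deux projections $p : Z\to X$ et $q : Z\to Y$ ; comme $X$ et $Y$ sont propres, $p$ et $q$ sont propres, et $p$ est un isomorphisme au-dessus de l'ouvert de d\'efinition $V\subset X$ de l'application rationnelle. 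On v\'erifie que $p_{*} : {\rm CH}_{0}(Z)\to{\rm CH}_{0}(X)$ est surjectif : \'etant donn\'e un point ferm\'e $x\in X$, le th\'eor\`eme de Bertini (on est en caract\'eristique z\'ero) fournit une courbe int\`egre lisse $C\subset X$ passant par $x$ et non contenue dans le compl\'ementaire de $V$ ; l'adh\'erence $\tilde C$ dans $Z$ de $p^{-1}(C\cap V)$ est une courbe int\`egre pour laquelle le morphisme induit $\tilde C\to C$ est propre et birationnel, donc un isomorphisme puisque $C$ est lisse (th\'eor\`eme principal de Zariski), ce qui rel\`eve la classe de $x$ \`a ${\rm CH}_{0}(Z)$. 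On rel\`eve ainsi la classe de $z$ en une classe $\tilde z\in{\rm CH}_{0}(Z)$ de degr\'e $N$, et l'on pose $w=q_{*}\tilde z$ ; la pouss\'ee en avant par un morphisme propre pr\'eservant le degr\'e d'un z\'ero-cycle, $w$ est un z\'ero-cycle de degr\'e $N$ sur $Y$. (On peut, de fa\c{c}on \'equivalente, invoquer l'invariance birationnelle bien connue du groupe de Chow des z\'ero-cycles des $k$-vari\'et\'es propres et lisses, ou encore un lemme de d\'eplacement sur la vari\'et\'e lisse $X$ permettant de supposer le support de $z$ contenu dans $V$ et de prendre directement pour $w$ l'image de $z$.)

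Par hypoth\`ese, $w$ est rationnellement \'equivalent sur $Y$ \`a un z\'ero-cycle effectif $e$, n\'ecessairement de degr\'e $N$. Il reste \`a revenir \`a $X$ : la $k$-vari\'et\'e $Y$ est lisse et int\`egre, $X$ est propre, et il existe une application $k$-rationnelle de $Y$ vers $X$ (la r\'eciproque de la pr\'ec\'edente) ; le lemme \ref{Nishicycle}, appliqu\'e au z\'ero-cycle effectif $e$ de degr\'e $N$ sur $Y$, fournit alors un z\'ero-cycle effectif de degr\'e $N$ sur $X$, ce qui conclut.

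Le point \`a surveiller est la premi\`ere \'etape, c'est-\`a-dire le transport d'un z\'ero-cycle \emph{a priori non effectif} \`a travers l'\'equivalence birationnelle sans modifier le degr\'e ; l'argument esquiss\'e ci-dessus (ou l'appel \`a l'invariance birationnelle de ${\rm CH}_{0}$) reste toutefois standard, la surjectivit\'e de $p_{*}$ ne posant pas de r\'eelle difficult\'e. L'\'etape de retour \`a $X$, qui est exactement l'objet du lemme \ref{Nishicycle}, est alors imm\'ediate.
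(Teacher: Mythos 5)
Votre démonstration est correcte et suit essentiellement la même démarche que celle du texte : transporter le zéro-cycle de degré $N$ de $X$ vers $Y$, y appliquer l'hypothèse d'effectivité, puis revenir à $X$ par le lemme \ref{Nishicycle} appliqué à l'application rationnelle réciproque. La seule différence est le mécanisme de transport --- vous passez par l'adhérence du graphe et la surjectivité de $p_{*}$ sur ${\rm CH}_{0}$ via un argument de Bertini, là où le texte se contente d'un lemme de déplacement facile ramenant le support du cycle dans l'ouvert commun, variante plus directe que vous mentionnez d'ailleurs vous-même entre parenthèses.
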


\begin{proof}
 Il existe des ouverts non vides  $U \subset Y$ et $V \subset X$ qui sont $k$-isomorphes.
Soit $z$ un z\'ero-cycle sur $X$ de degr\'e   $N$. Par un lemme de d\'eplacement facile,
ce z\'ero-cycle est rationnellement \'equivalent sur $X$ \`a un z\'ero-cycle de degr\'e $N$  \`a support dans $V$.
On consid\`ere le z\'ero-cycle image dans $U$. Il est de degr\'e $N$. Sur $Y$, il est rationnellement
\'equivalent \`a un z\'ero-cycle effectif de degr\'e $N$. Le lemme \ref{Nishicycle}  \'etablit l'existence d'un z\'ero-cycle de degr\'e $N$
sur $X$. 
\end{proof}

Nous pouvons maintenant \'enoncer le corollaire suivant du r\'esultat de Salberger.

\begin{theo}\label{corsalberger}
Soit $k$ un corps de caract\'eristique z\'ero.
Soit $X\to \P^1_{k}$ un $k$-morphisme de $k$-vari\'et\'es projectives et lisses
g\'eom\'etriquement int\`egres dont la fibre g\'en\'erique est   $k$-birationnelle \`a
 une vari\'et\'e de Severi-Brauer d'indice premier $p$, d'alg\`ebre associ\'ee $A/k(\P^1)$. 
 Soit $s$ la somme des degr\'es sur $k$ 
 des points ferm\'es de $\P^1_{k}$ o\`u l'alg\`ebre $A$ a un r\'esidu non trivial.
 Soit $N=N(p,s)\geq 1$ un entier congru \`a 1 modulo $p$ et au moins \'egal \`a $ (1/2) (p-1)(s- 2(p+1)/p)$.
 Les conditions suivantes sont \'equivalentes :

(a) $X$ poss\`ede un z\'ero-cycle de degr\'e 1.

(b) $X$ poss\`ede un z\'ero-cycle effectif de degr\'e $N$.

(c) Le produit sym\'etrique ${\rm Sym}^NX$ poss\`ede un $k$-point.
  \end{theo} 
  
  \begin{proof}
  Soit  $A/K=k(\P^1)$ l'alg\`ebre simple centrale associ\'ee \`a la fibre g\'en\'erique de $\pi$.
  Si $A$ n'est pas une alg\`ebre \`a division, alors c'est une alg\`ebre de matrices, et 
 la $k$-vari\'et\'e  $X$ est $k$-birationnelle \`a $\P^{p-1}_{k} \times_{k}\P^1_{k}$, donc \`a $\P^p_{k}$.
 Le groupe de Chow  $CH_{0}(X)$ des z\'ero-cycles  modulo l'\'equivalence rationnelle sur $X$
 est alors \'egal \`a $\Z$, engendr\'e par
 la classe d'un point $k$-rationnel, et le th\'eor\`eme est clair.
 
 Supposons que $A$ est une alg\`ebre \`a division. Soit $\pi : Y \to \P^1_{k}$ un mod\`ele
 donn\'e par le th\'eor\`eme \ref{salbergerenonce}.

La consid\'eration d'une fibre lisse sur un $k$-point de $\P^1_{k}$ montre que $X$ et $Y$
poss\`edent soit un $k$-point soit un point ferm\'e de degr\'e $p$ sur $k$.
Ceci \'etablit en particulier que (b) implique (a).
  Supposons (a). D'apr\`es ce qui pr\'ec\`ede,
il existe un z\'ero-cycle de degr\'e $N(p,s)$ sur $X$. Le th\'eor\`eme \ref{salbergerenonce} et
le lemme \ref{bireffectifN} donnent alors (b).
Les \'enonc\'es (b) et (c) sont clairement  \'equivalents.
  \end{proof}

\end{document}